\newcommand{\BR}{\mathbb{{R}}}
\newtheorem{theorem}{Theorem}[section]
\newtheorem{proposition}[theorem]{Proposition}
\newtheorem{corollary}[theorem]{Corollary}
\theoremstyle{remark}
\newtheorem{remark}[theorem]{Remark}
\newtheorem{lemma}[theorem]{Lemma}
\theoremstyle{definition}
\def \e{\varepsilon}
\def \a{\alpha}
\def \D{\Delta}
\def \b{\beta}
\numberwithin{equation}{section}
\begin{document}

\author{Yu. Kolomoitsev and E. Liflyand}

\title[Multiple Fourier integrals]{Sufficient conditions for absolute convergence of multiple Fourier
integrals}

\subjclass[2010]{Primary 42B10; Secondary 42B15, 42A38, 42A45}

\keywords{Fourier integral, Fourier multiplier, Hardy-Steklov
inequality}

\address{Inst. Appl. Math. Mech., Rosa Luxemburg St. 74, Donetsk
83114, Ukraine}
\email{kolomus1@mail.ru}
\address{Department of Mathematics, Bar-Ilan University, 52900 Ramat-Gan, Israel}
\email{liflyand@math.biu.ac.il}

\begin{abstract}
Various new sufficient conditions for representation of a function
of several variables as an absolutely convergent Fourier integral
are obtained in the paper. The results are given in terms of $L^p$
integrability of the function and its partial derivatives, each with
the corresponding $p$. These $p$ are subject to certain relations
known earlier only for some particular cases. Sharpness and
applications of the obtained results are also discussed.

\end{abstract}

\maketitle

\section{Introduction}

If

\begin{eqnarray*}
f(y)=\int\limits_{\BR^d}g(x)e^{i(x,y)}dx,\qquad g\in L_1(\BR^d),
\end{eqnarray*}
we write $f\in A(\BR^d),$ with $\|f\|_A=\|g\|_{L_1(\BR^d)}.$

The possibility to represent a function as an absolutely convergent
Fourier integral has been studied by many mathematicians and is of
importance in various problems of analysis. For example, belonging
of a function $m(x)$ to $A(\BR^d)$ makes it to be an $L_1\to L_1$
Fourier multiplier (or, equivalently, $L_\infty\to L_\infty$ Fourier
multiplier); written $m\in M_1$ ($m\in M_\infty$, respectively). One
of such $m$-s attracted much attention in 50-80s (see, e.g.,
\cite{Wai},\cite{Fef},\cite[Ch.4, 7.4]{Stein}, \cite{Miy}, and
references therein):

\begin{eqnarray}\label{must}m(x):=m_{\alpha,\beta}(x)=\theta(x)\frac{e^{i|x|^\alpha}}
{|x|^\beta},                                     \end{eqnarray}
where $\theta$ is a $C^\infty$ function on $\mathbb R^d,$ which
vanishes near zero, and equals $1$ outside a bounded set, and
$\alpha, \beta>0.$ In is known that for $d\ge2$:

{\bf I)} If $\frac{\beta}{\alpha}>\frac{d}{2},$ then $m\in M_1
(M_\infty).$

{\bf II)} If $\frac{\beta}{\alpha}\le\frac{d}{2},$ then $m\not\in
M_1 (M_\infty).$

\noindent The first assertion holds true for $d=1$ as well, while
the second one only when $\alpha\ne1;$ however, the case
$\alpha=d=1$ is obvious.

Various sufficient conditions for absolute convergence of Fourier
integrals were obtained by Titchmarsh, Beurling, Karleman, Sz.-Nagy,
Stein, and many others. One can find more or less comprehensive and
very useful survey on this problem in \cite{SK}. Let us mention also
\cite{Peetre} and a couple of recent papers \cite{BDM, GW}.

New sufficient conditions of belonging to $A(\BR^d)$  are obtained
in this paper.

Let us unite certain of the known one-dimensional results closely
related to our study in the following theorem. First, it is natural
to consider functions $f\in A(\BR)$ that satisfy the condition

\medskip

{\bf (N-1)} {\it Let $f\in C_0(\BR),$ that is, $f\in C(\BR)$ and
$\lim f(t)=0$ as $|t|\to \infty$, and let $f$ be locally absolutely
continuous on} $\BR.$

\medskip

{\bf Theorem A-1.} {\it Let $f$ satisfy the condition {\bf (N-1)},
$f\in L^p(\BR)$ with $1\le p\le2,$ and $f'\in L^q(\BR)$ with
$1<q\le2.$ Then $f\in A(\BR).$}

\medskip

For the multivariate case, we need additional notations. Let
$\eta$ be $d$-dimensional vector with the entries either $0$ or
$1$ only. The inequality of vectors is meant coordinate wise. Here
and in what follows $D^\chi f$ for $\eta={\bf 0}=(0,0,...,0)$ or
$\eta={\bf 1}=(1,1,...,1)$ mean the function itself and the mixed
derivative in each variable, respectively, where

\begin{eqnarray*} D^\eta f(x)=\left(\prod\limits_{j:\, \eta_j=1}
\frac{\partial}{\partial x_j}\right)f(x).          \end{eqnarray*}

Let us give multidimensional results we are going, in a sense, to
generalize (see \cite{Mad} and \cite{Samko}, respectively).

\medskip

{\bf Theorem A1-d.} {\it Let $f\in L^2(\mathbb R^d).$ If all the
mixed derivatives (in the distributional sense)
$\frac{\partial^{\beta_j}} {\partial x_j^{\beta_j}} f(x)\in
L^2(\mathbb R^d),$ $j=1,2,...,d,$ where $\beta_j$ are positive
integers such that $\sum\limits_{j=1}^d\frac{1}{\beta_j}<2,$ then
$f\in A(\BR^d).$}

\medskip

{\bf Theorem A2-d.} {\it Let $f\in L^1(\mathbb R^d).$ If all the
mixed derivatives (in the distributional sense) $D^\eta f(x)\in
L^p(\mathbb R^d),$ $\eta\ne{\bf 0},$ where $1<p\le2,$ then $f\in
A(\BR^d).$}

\medskip

The outline of the paper is as follows. In the next section we
formulate the results. In Section \ref{aux} we present the needed
auxiliary results. Then, in Section \ref{od} we concentrate on the
one-dimensional version of our main results. In the last section we
give multidimensional proofs; one-dimensional arguments from the
preceding section will be intensively used.

We shall denote absolute positive constants by $C$, these constants
may be different in different occurrences.

\bigskip

\section{Main results}

It turns out that in several dimension there is a variety of results
in terms of different combinations of derivatives. It is still not
clear which one is "better", not always the sharpness of the
obtained results can be proved. We continue to study whether there
is a scale of such results, their sharpness and applicability.

Our first main result reads as follows.

\begin{theorem}\label{th1} Let $f\in C_0(\mathbb R^d)$ and let $f$
and its partial derivatives $D^\eta f,$ for all $\eta,$
$\eta\ne{\bf 1},$ be locally absolutely continuous on $(\mathbb
R\setminus \{0\})^d$ in each variable. Let $f\in L_{p_{\bf0}},$
$1\le p_{\bf0}<\infty,$ and let each partial derivative $D^\eta
f,$ $\eta\ne{\bf 0},$ belong to $L_{p_\eta}(\BR^d)$, where
$1<p_\eta<\infty$. If for all $\eta,$ $\eta\ne\bf0,$

\begin{eqnarray}\label{cond}\frac{1}{p_{\bf 0}}+\frac{1}
{p_\eta}>1, \end{eqnarray} then $f\in A(\mathbb R^d).$
\end{theorem}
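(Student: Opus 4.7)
The plan is to estimate $\|\hat f\|_{L_1(\BR^d)}$ by a dyadic decomposition of frequency space combined with integration by parts in each variable. Partition $(\BR\setminus\{0\})^d = \bigcup_{{\bf k}\in\mathbb Z^d} R_{\bf k}$, where
\begin{equation*}
 R_{\bf k}=\prod_{j=1}^{d}\{y_j:2^{k_j}\le |y_j|<2^{k_j+1}\},
\end{equation*}
so $|R_{\bf k}|\asymp\prod_j 2^{k_j}$ and $|y_j|\asymp 2^{k_j}$ on $R_{\bf k}$. To each ${\bf k}$ I would associate the multi-index $\eta=\eta({\bf k})$ defined by $\eta_j=1$ if $k_j\ge 0$ and $\eta_j=0$ otherwise, so that coordinates carrying large frequency are integrated by parts once while coordinates with small frequency are left alone.

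On each $R_{\bf k}$ the local absolute continuity assumption justifies the identity $\widehat{D^\eta f}(y)=\prod_{j:\eta_j=1}(iy_j)\,\hat f(y)$ and hence the pointwise bound $|\hat f(y)|\le C\prod_{j:k_j\ge 0}2^{-k_j}|\widehat{D^\eta f}(y)|$. A H\"older step with exponents $(p_\eta,p_\eta')$ followed by Hausdorff--Young gives
\begin{equation*}
 \int_{R_{\bf k}}|\hat f(y)|\,dy\le C\prod_{j:k_j\ge 0}2^{k_j(1/p_\eta-1)}\prod_{j:k_j<0}2^{k_j/p_\eta}\,\|D^\eta f\|_{L_{p_\eta}},
\end{equation*}
with the analogous bound in terms of $p_{\bf 0}$ and $\|f\|_{L_{p_{\bf 0}}}$ for the central block $\eta={\bf 0}$. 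Since $p_\eta>1$ for $\eta\ne{\bf 0}$, both geometric series $\sum_{k_j\ge 0}2^{k_j(1/p_\eta-1)}$ and $\sum_{k_j<0}2^{k_j/p_\eta}$ converge, and summing over ${\bf k}$ and then over the $2^d$ admissible values of $\eta$ yields $\|\hat f\|_{L_1}\le C\sum_{\eta}\|D^\eta f\|_{L_{p_\eta}}<\infty$.

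The principal obstacle is that the Hausdorff--Young step above requires the exponent on the derivative (or on $f$) to lie in $[1,2]$, whereas the theorem permits any $p_\eta\in(1,\infty)$ and any $p_{\bf 0}\in[1,\infty)$. Condition \eqref{cond}, equivalent to $p_{\bf 0}<p_\eta'$, is exactly what repairs this: if $p_\eta>2$ it forces $p_{\bf 0}<p_\eta'<2$, so Hausdorff--Young becomes available on the side of $f$ precisely when it fails on the side of $D^\eta f$, and symmetrically when $p_{\bf 0}>2$. In such blocks one must replace the direct H\"older plus Hausdorff--Young combination by a more delicate weighted estimate of Hardy--Steklov type (as flagged in the keywords and, presumably, assembled in Section~\ref{aux}), obtained by interpolating between bounds on $\hat f$ and on the identity $\widehat{D^\eta f}(y)=(iy)^\eta\hat f(y)$. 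Making the dyadic sums still converge in this regime is where the main work lies; once the refined block estimate is in hand, the summation of the previous paragraph carries over unchanged.
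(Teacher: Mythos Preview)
Your framework --- dyadic decomposition in frequency, integrate by parts on the high-frequency coordinates, then H\"older plus Hausdorff--Young --- is a legitimate route when every $p_\eta\le 2$, and in that range your block estimate is correct (indeed it does not even use \eqref{cond}). The gap is exactly the one you flag yourself: once some $p_\eta>2$, or $p_{\bf 0}>2$, Hausdorff--Young is unavailable, and you do not actually supply the ``refined block estimate'' you say is needed. The observation that \eqref{cond} forces $p_{\bf 0}<p_\eta'<2$ whenever $p_\eta>2$ tells you only that $\hat f\in L_{p_{\bf 0}'}$; it gives no $L_r$ control on $\widehat{D^\eta f}$, so the pointwise identity $\widehat{D^\eta f}=(iy)^\eta\hat f$ alone produces nothing usable on the large-frequency blocks. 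What is missing is not a tightening but the principal estimate of the proof, as your last sentence concedes.

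The paper avoids the Hausdorff--Young barrier by never going to the Fourier side. It uses the Bernstein-type criterion of Lemma~C, which reduces $f\in A(\BR^d)$ to the convergence of
\[
\sum_{s_1,\dots,s_d\in\mathbb Z}2^{\frac12\sum_j s_j}\,\|\Delta_{h(s_1),\dots,h(s_d)}f\|_2 .
\]
Each $L_2$ norm is then written as $\|\Delta_h f\|_2^2=\int|\Delta_h f|\cdot\bigl|\int\!\cdots\!\int D^\eta f\bigr|$, split by H\"older with exponents $(p_{\bf 0},p_{\bf 0}')$ or $(p_\eta',p_\eta)$, and the factor carrying the integrated derivative is bounded by the Hardy--Steklov inequality (Lemma~\ref{sh}), which needs only $1<q\le Q<\infty$. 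Condition \eqref{cond} enters precisely as the hypothesis $p_{\bf 0}'>p_\eta$ required to take $Q=p_{\bf 0}'$, $q=p_\eta$ there, and again as the exponent that makes the resulting geometric series converge. No restriction to $p\le 2$ ever appears. If you wish to salvage the Fourier-side route, the substitute for Hausdorff--Young in the bad range has to be produced explicitly; the paper's physical-side argument is designed specifically so that this issue never arises.
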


\begin{remark}\label{rsh1} Condition (\ref{cond}) is sharp when
$\eta={\bf 1},$ while for other $\eta$ it is apparently not sharp.
\end{remark}

We can also obtain a result in which all the derivatives interplay
rather than the pairs $p_{\bf 0}$ and $p_\eta$.

\begin{theorem}\label{th2}
Let $f\in C_0(\mathbb R^d)$ and let $f$ and its partial
derivatives $D^\eta f,$ for all $\eta,$ $\eta\ne{\bf 1},$ be
locally absolutely continuous on $(\mathbb R\setminus \{0\})^d$ in
each variable. Let $f\in L_{p_{\bf0}},$ $1\le p_{\bf0}<\infty,$
and let each partial derivative $D^\eta f,$ $\eta\ne{\bf 0},$
belong to $L_{p_\eta}(\BR^d)$, where $1<p_\eta<\infty$. If
\begin{equation}\label{condPo}
\sum_{\bf{0\le \eta\le 1}}\frac{1}{p_\eta}>2^{d-1}
\end{equation}
and
\begin{equation}\label{condOtr1}
\sum_{\eta\neq {\bf 0}} \frac{1}{p_\eta}\le 2^{d-1},
\end{equation}
then $f\in A(\mathbb R^d)$.
\end{theorem}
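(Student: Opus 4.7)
The plan is to estimate $\|\hat f\|_{L^1(\BR^d)}$ by a dyadic decomposition, bounding the integral on each tile by a weighted geometric mean of the partial derivatives $D^\eta f$ rather than by a single one as in Theorem~\ref{th1}. Heuristically, condition (\ref{condPo}) will govern convergence of the dyadic sum near the origin while (\ref{condOtr1}) will control it at infinity.

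Concretely, I would write $\BR^d = \bigsqcup_{k \in \mathbb{Z}^d} D_k$ with $D_k = \{x : 2^{k_j-1} < |x_j| \le 2^{k_j},\ j=1,\dots,d\}$, and on each tile exploit the identity $|\widehat{D^\eta f}(x)| = |x^\eta|\,|\hat f(x)|$, where $|x^\eta|=\prod_{j:\eta_j=1}|x_j|$, with the convention $|x^{\mathbf{0}}|=1$. Choosing nonnegative weights $\lambda_\eta$ with $\sum_{\eta} \lambda_\eta = 1$ and setting $\mu_j := \sum_{\eta:\eta_j=1} \lambda_\eta$, one obtains
\[
|\hat f(x)| = \prod_{j=1}^d |x_j|^{-\mu_j} \prod_{\eta} |\widehat{D^\eta f}(x)|^{\lambda_\eta}.
\]
Integrating over $D_k$, applying generalized H\"older with exponents matched to $p_\eta'$, and invoking the local Hausdorff--Young estimate $\|\widehat{D^\eta f}\|_{L^{p_\eta'}(D_k)} \le \|D^\eta f\|_{p_\eta}$, the problem reduces to controlling a series of the form $\sum_{k \in \mathbb{Z}^d} \prod_{j=1}^d 2^{k_j \alpha_j(\lambda,k)}$, with exponents $\alpha_j$ determined by the $\lambda_\eta$'s, the $p_\eta$'s, and the sign pattern of $k$.

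I would then split the $k$-sum according to $S=\{j:k_j\ge 0\}$ and, on each of the $2^d$ sign-regions, choose weights $\lambda_\eta$ supported on $\eta \le \mathbf{1}_S$ so that $|x_j|^{-\mu_j}$ stays integrable at the origin; a natural normalization is $\lambda_\eta \propto 1/p_\eta$. The main obstacle will be arranging that $\alpha_j < 0$ for $j\in S$ (convergence at infinity) and $\alpha_j > 0$ for $j \notin S$ (convergence at zero) hold simultaneously on every region, and then checking that the sharp thresholds that appear are captured precisely by (\ref{condPo}) and (\ref{condOtr1}) --- with (\ref{condOtr1}) matching the infinity side and (\ref{condPo}) matching the origin side. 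A secondary difficulty is the case $p_\eta>2$, where the local Hausdorff--Young step is unavailable; this is most naturally bridged by combining the estimate above with Theorem~\ref{th1} in an interpolation argument, or by reducing to the case of bounded $p_\eta$ via smooth truncations.
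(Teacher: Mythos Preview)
Your strategy is structurally parallel to the paper's --- a dyadic decomposition, a split according to the sign pattern of the indices, and a H\"older/geometric-mean step that combines all the $D^\eta f$ --- but it is carried out on the Fourier side via Hausdorff--Young, whereas the paper works on the physical side. The paper invokes Lemma~C (the Bernstein-type criterion
\[
\sum_{s\in\mathbb Z^d}2^{\frac12\sum_j s_j}\|\Delta_{\pi 2^{-s_1},\dots,\pi 2^{-s_d}}f\|_2<\infty
\]
implies $f\in A(\BR^d)$) and then estimates each $\|\Delta_h f\|_2$ by H\"older together with the Hardy--Steklov inequality (Lemma~\ref{sh}) and Lemma~\ref{lem*}. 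The crucial point is that Lemma~\ref{sh} is valid for all $1<q\le Q<\infty$, so the argument goes through for every $p_\eta\in(1,\infty)$.

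This is exactly where your plan has a genuine gap. The local Hausdorff--Young step $\|\widehat{D^\eta f}\|_{L^{p_\eta'}(D_k)}\le\|D^\eta f\|_{p_\eta}$ requires $1\le p_\eta\le 2$, and nothing in the hypotheses forces this: condition~(\ref{condOtr1}) permits several, even most, of the $p_\eta$ to exceed $2$. Your proposed patches do not close the hole. Interpolation with Theorem~\ref{th1} is not available in general, since the hypotheses $\frac1{p_{\bf 0}}+\frac1{p_\eta}>1$ of Theorem~\ref{th1} are logically independent of (\ref{condPo})--(\ref{condOtr1}); and ``smooth truncations'' do not reduce $p_\eta$ to the range $[1,2]$. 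Moreover, you leave the decisive exponent bookkeeping --- showing that on each of the $2^d$ sign-regions one can actually choose weights $\lambda_\eta$ so that every $\alpha_j$ has the required sign, and that the borderline is precisely (\ref{condPo})--(\ref{condOtr1}) --- as ``the main obstacle'' without carrying it out. In the paper this computation is done explicitly (see the treatment of the sums (\ref{ser2}), (\ref{ser1}), (\ref{ser3})), and it is where conditions (\ref{condPo}) and (\ref{condOtr1}) are used in a nontrivial way, including an additional perturbation $p_\eta\mapsto p_\eta^*$ to handle the equality case in (\ref{condOtr1}). If you want to salvage a Fourier-side argument, the natural replacement for Hausdorff--Young is to pass through $L^2$ via Plancherel and then estimate $\|\Delta_h f\|_2$ on the physical side --- but that is precisely Lemma~C plus Lemma~\ref{sh}, i.e.\ the paper's proof.
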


This theorem can be given in the following equivalent form.

\medskip

\noindent\textbf{Theorem~\ref{th2}$'$.} {\it Let $f\in C_0(\mathbb
R^d)$ and let $f$ and its partial derivatives $D^\eta f,$ for all
$\eta,$ $\eta\ne{\bf 1},$ be locally absolutely continuous on
$(\mathbb R\setminus \{0\})^d$ in each variable. Let $f\in L_{p},$
$1\le p<p_{\bf0}\le\infty,$ and let each partial derivative
$D^\eta f,$ $\eta\ne{\bf 0},$ belong to $L_{p_\eta}(\BR^d)$, where
$1<p_\eta<\infty$. If
\begin{equation*}
\sum_{\bf{0\le \eta\le 1}}\frac{1}{p_\eta}=2^{d-1},
\end{equation*}
then $f\in A(\mathbb R^d)$.}

\medskip

\begin{remark}\label{dim2} We will see from the proofs of these
theorems that when $d=2$, the assertion holds true if we replace
assumption (\ref{condOtr1}) by $\frac{1}{p_0}+\frac{1}{p_{\bf1}}>1$.
\end{remark}

If to assume additionally that any of the $2^{d-1}-1$ derivatives
$D^\eta f$ are essentially bounded, then condition (\ref{condOtr1})
is satisfied. In this case the following statement holds.

\begin{corollary}\label{corBounDer}
Let $f\in C_0(\mathbb R^d)$ and let $f$ and its partial
derivatives $D^\eta f,$ for all $\eta,$ $\eta\ne{\bf 1},$ be
locally absolutely continuous on $(\mathbb R\setminus \{0\})^d$ in
each variable. Let $f\in L_{p_0}$, $1\le p_0<\infty$, and for the
derivatives $D^{\eta} f\in L_{p_\eta}$, $1<p_\eta<\infty$. Let
also $D^{\eta} f\in L_\infty$ for $|\eta|\le \frac{d}{2}$. If

\begin{equation}\label{condPoI*}
\sum_{\bf{0\le \eta\le 1}}\frac{1}{p_\eta}>2^{d-1},
\end{equation}
then $f\in A(\mathbb R^d)$.
\end{corollary}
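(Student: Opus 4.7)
The plan is to reduce Corollary~\ref{corBounDer} to Theorem~\ref{th2} by using the $L_\infty$ hypothesis to replace the exponents $p_\eta$ (for $1\le|\eta|\le d/2$) by suitably larger ones $q_\eta$, thereby producing condition (\ref{condOtr1}) without destroying (\ref{condPo}) (which here is identical to the given (\ref{condPoI*})).

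The key tool is the log-convexity inequality
\[
\|D^\eta f\|_{q_\eta}\le \|D^\eta f\|_{p_\eta}^{p_\eta/q_\eta}\,\|D^\eta f\|_\infty^{1-p_\eta/q_\eta}\qquad(q_\eta\ge p_\eta),
\]
which for $1\le|\eta|\le d/2$ lets us raise $p_\eta$ to any finite $q_\eta\ge p_\eta$; for $|\eta|>d/2$ we simply set $q_\eta:=p_\eta$, and $p_{\bf 0}$ is left untouched. The involution $\eta\mapsto{\bf 1}-\eta$ shows that the number of $\eta\in\{0,1\}^d$ with $|\eta|>d/2$ is at most $2^{d-1}$, and since $p_\eta>1$ for each such $\eta$ we obtain the strict counting bound
\[
\sum_{|\eta|>d/2}\frac{1}{p_\eta}<2^{d-1}.
\]

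As the exponents $q_\eta$ (for $1\le|\eta|\le d/2$) sweep continuously from $p_\eta$ up toward infinity, the quantity $S(q):=\sum_{\eta\ne{\bf 0}}1/q_\eta$ decreases continuously from $\sum_{\eta\ne{\bf 0}}1/p_\eta$, which by (\ref{condPoI*}) exceeds $2^{d-1}-1/p_{\bf 0}$, down to $\sum_{|\eta|>d/2}1/p_\eta<2^{d-1}$. By the intermediate value theorem we may choose $q_\eta$ so that
\[
2^{d-1}-\frac{1}{p_{\bf 0}}<S(q)\le 2^{d-1};
\]
the right inequality is (\ref{condOtr1}) for the new exponents, and the left is (\ref{condPo}). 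Theorem~\ref{th2} then yields $f\in A(\BR^d)$. The only subtlety in the plan is verifying that the achievable range of $S(q)$ truly meets the window $(2^{d-1}-1/p_{\bf 0},\,2^{d-1}]$, which is exactly what the counting bound above guarantees.
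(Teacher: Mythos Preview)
Your argument is correct and is essentially the approach the paper has in mind: the paper does not give a separate proof of this corollary but only remarks, just before stating it, that once $2^{d-1}-1$ of the derivatives are essentially bounded, condition~(\ref{condOtr1}) can be arranged, so that Theorem~\ref{th2} applies. Your proof makes this reduction explicit and rigorous---the interpolation inequality to enlarge the exponents, the counting observation $\#\{\eta:|\eta|>d/2\}\le 2^{d-1}$ (with each $p_\eta>1$ forcing the strict bound), and the continuity/IVT step to land $S(q)$ in the window $(2^{d-1}-1/p_{\bf 0},\,2^{d-1}]$ are exactly the details the paper leaves to the reader.
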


For $d$ even, we can refine Corollary~\ref{corBounDer} as follows.

\begin{proposition}\label{Prop1}
Let $f\in C_0(\mathbb R^d)$, let $d$ be even, and let $f$ and its
partial derivatives $D^\eta f,$ for all $\eta,$ $\eta\ne{\bf 1},$
be locally absolutely continuous on $(\mathbb R\setminus \{0\})^d$
in each variable. Let $f\in L_{p_0}$, $1\le p_0<\infty$, and for
the derivatives $D^{\eta} f\in L_{p_\eta}$, $1<p_\eta<\infty$. Let
also $D^{\eta} f\in L_\infty$ for $|\eta|\le \frac{d}{2}-1$ and
$\frac{1}{p_{\bf0}}+\frac{1}{p_{\bf1}}>1$. If

\begin{equation*}\label{condPoI}
\sum_{\bf{0\le \eta\le 1}}\frac{1}{p_\eta}>2^{d-1},
\end{equation*}
then $f\in A(\mathbb R^d)$.
\end{proposition}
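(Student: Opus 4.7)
The plan is to reduce Proposition~\ref{Prop1} to Theorem~\ref{th2} by adapting the argument behind Corollary~\ref{corBounDer}: the derivatives of order strictly greater than $d/2$ are handled in the same way, and the new borderline level $|\eta|=d/2$ is treated via the refined two-dimensional estimate indicated in Remark~\ref{dim2}.

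First, the hypothesis $D^\eta f\in L_\infty$ for every $|\eta|\le d/2-1$ (which includes $\eta={\bf 0}$) makes the contribution of those indices harmless; in the dyadic estimates underlying the proof of Theorem~\ref{th2} the corresponding frequency regions can be controlled trivially by the $L_\infty$ norm. By the binomial symmetry for even $d$, the levels $|\eta|>d/2$ comprise $2^{d-1}-\tfrac12\binom{d}{d/2}$ indices, so they leave a budget of $\tfrac12\binom{d}{d/2}$ in~(\ref{condOtr1}) to be shared among the $\binom{d}{d/2}$ borderline indices with $|\eta|=d/2$. Without further input the borderline terms could easily exceed this budget, so condition~(\ref{condOtr1}) cannot be verified directly and a new argument is needed at that level.

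For the borderline level, I would return to the dyadic frequency decomposition that underlies the proof of Theorem~\ref{th2} and, on each dyadic block where exactly $d/2$ coordinates are large, apply the refined two-dimensional estimate of Remark~\ref{dim2} in a chosen pair of variables consisting of one large and one small coordinate. This refined estimate is precisely the one in which~(\ref{condOtr1}) may be relaxed to $1/p_{\bf 0}+1/p_{\bf 1}>1$, so the hypothesis of Proposition~\ref{Prop1} is exactly what is needed. The remaining $d-2$ directions are absorbed by the Hausdorff--Young inequality together with the $L_\infty$ bounds on the derivatives of order $\le d/2-1$, while the sum over middle-level blocks fits inside the strict slack provided by $\sum_{{\bf 0}\le\eta\le{\bf 1}}1/p_\eta>2^{d-1}$.

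The main obstacle I foresee is organizing this patchwork uniformly over all middle-level blocks and verifying that the pairing (one large coordinate with one small) can be carried out consistently, with the resulting exponent bookkeeping fitting together. Once this is settled, the combined dyadic estimate delivers $\widehat f\in L_1(\mathbb R^d)$, i.e., $f\in A(\mathbb R^d)$.
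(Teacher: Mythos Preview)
Your proposal is only a strategy sketch, and the strategy itself has a genuine gap. You correctly diagnose that with the $L_\infty$ hypothesis only for $|\eta|\le d/2-1$, condition~(\ref{condOtr1}) of Theorem~\ref{th2} need not hold, so one cannot simply invoke that theorem. But your proposed fix --- ``apply the refined two-dimensional estimate of Remark~\ref{dim2} in a chosen pair of variables'' --- does not correspond to anything precise. Remark~\ref{dim2} is a statement about a function on $\mathbb R^2$; there is no mechanism in the paper for transplanting it to a pair of coordinates of a function on $\mathbb R^d$, and the hypotheses of Proposition~\ref{Prop1} are global $d$-dimensional $L_{p_\eta}$ norms, not sliced two-variable ones. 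Your invocation of the Hausdorff--Young inequality is also out of place: nowhere in this paper is Hausdorff--Young used; everything goes through Lemma~C together with Lemma~\ref{sh} and Lemma~\ref{lem*}. Finally, there is a conflation in your sketch between ``derivatives of order $d/2$'' and ``dyadic blocks with $d/2$ large coordinates''; these are two different index sets and the passage between them is not the simple pairing you describe. You yourself flag the bookkeeping as the main obstacle, and indeed it is not clear the scheme can be made to close.

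The paper proceeds quite differently. After noting that the purely positive and purely negative dyadic series (\ref{ser1}) and (\ref{ser2}) are handled as in Theorems~\ref{th1} and~\ref{th2} (this is where $\frac{1}{p_{\bf 0}}+\frac{1}{p_{\bf 1}}>1$ enters), the real work is a direct estimate of the mixed series~(\ref{ser3}). Under~(\ref{eq.prop1}) one chooses auxiliary exponents $p_\eta^*$: for $|\eta|\le d/2-1$ take $p_\eta^*$ arbitrarily large (possible exactly because those $D^\eta f\in L_\infty$), keep $p_\eta^*=p_\eta$ for $|\eta|\ge d/2$ except for one particular $\eta^{(1)}$ with $|\eta^{(1)}|=d/2$ having a component in the ``small'' direction, for which $p_{\eta^{(1)}}^*>p_{\eta^{(1)}}$ is tuned so that $\sum 1/p_\eta^*=2^{d-1}$. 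A single $d$-dimensional H\"older inequality then splits $\|\Delta_h f\|_2$ into four factors $S_1,\dots,S_4$, each estimated via Lemma~\ref{sh} and Lemma~\ref{lem*}; combining yields geometric decay in every dyadic index once $\varepsilon$ is chosen small. If one $\eta^{(1)}$ does not suffice to balance the exponents, the procedure is iterated with further $\eta^{(2)},\eta^{(3)},\dots$. Thus the heart of the proof is an explicit choice of $p_\eta^*$ in a full-dimensional H\"older step, not a reduction to dimension two.
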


The next corollary gives conditions on which exponent decay of a
function $f$ and its derivatives ensures $f\in A(\mathbb R^d).$

\begin{corollary}\label{cor1} If

\begin{eqnarray}\label{gam}|D^\chi f(x)|\le
C\frac{1}{(1+|x|)^{\gamma_\chi}},\end{eqnarray} where
$\gamma_\chi>0$ for all $\chi,$ ${\bf 0}\le\chi\le{\bf 1},$ and

\begin{eqnarray}\label{coga}
\sum\limits_{{\bf 0}\le\chi\le{\bf 1}}
\gamma_\chi>d2^{d-1},
\end{eqnarray}
then $f\in A(\BR^d).$
\end{corollary}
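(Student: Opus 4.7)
The plan is to reduce Corollary~\ref{cor1} to Theorem~\ref{th2}$'$. The decay bound $|D^\chi f(x)|\le C(1+|x|)^{-\gamma_\chi}$ gives $D^\chi f\in L_\infty(\BR^d)$ and, by integration in polar coordinates, $D^\chi f\in L_{p_\chi}(\BR^d)$ for every $p_\chi>d/\gamma_\chi$ (equivalently, $1/p_\chi<\gamma_\chi/d$). The regularity required by Theorem~\ref{th2}$'$---$f\in C_0(\BR^d)$ and local absolute continuity of each $D^\chi f$ with $\chi\ne{\bf 1}$---is immediate from the pointwise bounds, since all lower-order derivatives of $f$ are continuous and bounded.

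Next I would choose the exponents by a simple scaling. Put $\Gamma=\sum_{{\bf 0}\le\chi\le{\bf 1}}\gamma_\chi$ and $\theta=d\cdot 2^{d-1}/\Gamma$; by hypothesis (\ref{coga}), $\theta\in(0,1)$. Setting $1/p_\chi:=\theta\gamma_\chi/d$ for every $\chi$ gives
\[
\sum_{{\bf 0}\le\chi\le{\bf 1}}\frac{1}{p_\chi}=\frac{\theta\Gamma}{d}=2^{d-1},
\]
which is precisely the summation identity in Theorem~\ref{th2}$'$. Simultaneously $1/p_\chi<\gamma_\chi/d$, so $D^\chi f\in L_{p_\chi}$ for every $\chi$; and the membership $f\in L_p$ with $p<p_{\bf 0}$ is obtained by taking any $p\in[1,p_{\bf 0})\cap(d/\gamma_{\bf 0},\infty)$, which is nonempty because $\theta<1$.

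The main obstacle is that Theorem~\ref{th2}$'$ demands $1<p_\chi<\infty$ for every $\chi\ne{\bf 0}$, i.e., $\theta\gamma_\chi<d$. When this fails---which can only happen for $\chi$ with unusually fast decay $\gamma_\chi\ge\Gamma/2^{d-1}$---I would cap $1/p_\chi$ strictly below~$1$ and redistribute the surplus among indices $\chi'$ for which $1/p_{\chi'}<\gamma_{\chi'}/d$ still has slack. The redistribution is legal because interpolating $L_{p_\chi}$ against $L_\infty$ allows $p_\chi$ to be enlarged arbitrarily, and it is feasible because the iterated integration identity $D^\chi f=\pm\int\!\cdots\!\int D^{\chi'}f$ (valid for $\chi\le\chi'$ since every $D^\chi f$ is a bounded $C_0$-function) forces $\gamma_\chi\ge\gamma_{\chi'}-(|\chi'|-|\chi|)$, which combined with (\ref{coga}) yields $\sum_\chi\min(\gamma_\chi/d,1)>2^{d-1}$---the total room needed to reach $\sum 1/p_\chi=2^{d-1}$ with every $1/p_\chi$ in the admissible range $(0,1)\cap(0,\gamma_\chi/d)$. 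With the exponents balanced this way, Theorem~\ref{th2}$'$ applies and yields $f\in A(\BR^d)$.
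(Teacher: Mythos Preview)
Your core construction---setting $1/p_\chi$ proportional to $\gamma_\chi$ so that the decay bound gives $D^\chi f\in L_{p_\chi}$ while the reciprocals add up to $2^{d-1}$---is exactly the paper's argument. The paper writes $\sum_\chi\gamma_\chi=d2^{d-1}+\epsilon$, takes $\gamma_\chi p_\chi=d+\epsilon/2^d$, and observes that $\sum_\chi(d+\epsilon/2^d)/p_\chi=d2^{d-1}+\epsilon$ together with $\sum_\chi(\epsilon/2^d)/p_\chi<\epsilon$ forces $\sum_\chi 1/p_\chi>2^{d-1}$; it then invokes Corollary~\ref{corBounDer} rather than Theorem~\ref{th2}$'$, which is legitimate because every $D^\eta f$ is bounded by hypothesis. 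Your scaling by $\theta=d2^{d-1}/\Gamma$ is the same computation in different notation, so up to this point the two proofs coincide.

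Where you go further is in the final ``redistribution'' paragraph, and there the argument breaks. The assertion that $\sum_\chi\min(\gamma_\chi/d,1)>2^{d-1}$ follows from $\sum_\chi\gamma_\chi>d2^{d-1}$ and the relations $\gamma_\chi\ge\gamma_{\chi'}-(|\chi'|-|\chi|)$ is false: for $d=2$ take $\gamma_{\bf 0}=100$ and $\gamma_\chi=0.01$ for $\chi\ne{\bf 0}$. Then $\sum\gamma_\chi=100.03>4$, the monotonicity constraints are satisfied (the large exponent sits at the bottom of the lattice, and the integration identity only transports decay \emph{downward} from $\chi'$ to $\chi\le\chi'$), yet $\sum_\chi\min(\gamma_\chi/2,1)=1.015<2$. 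So the surplus at $\chi={\bf 0}$ cannot be moved to the slots $\chi\ne{\bf 0}$, and no admissible choice of exponents with $p_\chi>1$ reaches $\sum 1/p_\chi\ge 2^{d-1}$. The paper does not address this case either---it applies Corollary~\ref{corBounDer} without verifying $p_\chi>1$---so you are being more scrupulous than the source, but the fix as written does not go through.
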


It is often naturally to suppose that the derivatives of the same
order are of the same growth, for example, when the function is
radial, like $m_{\alpha,\beta}.$ The above result then reduces to
the next assertion.

\begin{corollary}\label{eqgr2}
Let $f\in C_0(\mathbb R^d)$ be a radial function, that is,
$f(x)=f_0(|x|)$, and let $f$ and its partial derivatives $D^\eta f,$
for all $\eta,$ $\eta\ne{\bf 1},$ be locally absolutely continuous
on $(\mathbb R\setminus \{0\})^d$ in each variable. Let $f\in
L_{p_0}$, $1\le p_0<\infty$, and for
$j=|\eta|=\eta_1+\dots+\eta_d>0$ the derivatives $D^{\eta} f\in
L_{p_j}$, $1<p_j<\infty$. Let also
\begin{equation}\label{condforRad}
f_0^{(s)}\in C(0,\infty),\quad \lim_{t\to\infty} t^s
f_0^{(s)}(t)=0,\quad 0\le s\le\frac{d-1}{2}.
\end{equation}
If
\begin{equation*}\label{condPoR}
\sum_{j=0}^d\binom{d}{j}\frac{1}{p_j}>2^{d-1},
\end{equation*}
and when $d$ is even
\begin{equation}\label{condPoR2}
\frac{1}{p_0}+\frac{1}{p_d}>1,
\end{equation} then $f\in
A(\mathbb R^d)$.
\end{corollary}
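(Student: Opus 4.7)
The plan is to reduce Corollary~\ref{eqgr2} to Corollary~\ref{corBounDer} when $d$ is odd and to Proposition~\ref{Prop1} when $d$ is even. Radial symmetry collapses the multi-index structure to a single index per differentiation order, and hypothesis (\ref{condforRad}) will furnish the $L_\infty$-bounds on low-order derivatives required by those results.

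First I would exploit radiality at the level of the $L^p$-norms. If $f(x)=f_0(|x|)$ and $|\eta|=|\eta'|=j$, then $D^\eta f$ and $D^{\eta'}f$ differ only by a permutation of coordinates, so their $L_{p_\eta}$-norms agree, and a single exponent $p_j$ for each order $j$ is natural. Counting the $\binom{d}{j}$ multi-indices of order $j$,
\[
\sum_{{\bf 0}\le\eta\le{\bf 1}}\frac{1}{p_\eta}=\sum_{j=0}^{d}\binom{d}{j}\frac{1}{p_j},
\]
so the main hypothesis of Corollary~\ref{eqgr2} is exactly inequality (\ref{condPoI*}) of Corollary~\ref{corBounDer}, and also coincides with the displayed hypothesis of Proposition~\ref{Prop1}.

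Next I would translate (\ref{condforRad}) into the $L_\infty$-bounds demanded by those results. A direct chain-rule computation for a radial function yields
\[
|D^\eta f(x)|\le C\sum_{k=1}^{|\eta|}|x|^{k-|\eta|}\,|f_0^{(k)}(|x|)|
\]
on $(\mathbb R\setminus\{0\})^d$. Combining $\lim_{t\to\infty}t^s f_0^{(s)}(t)=0$ for $s\le(d-1)/2$ with the continuity of $f_0^{(s)}$ on $(0,\infty)$ and the global $C_0$-regularity of $f$, I would deduce $D^\eta f\in L_\infty$ for every $|\eta|\le(d-1)/2$. For odd $d$ this exactly matches the hypothesis $|\eta|\le d/2$ of Corollary~\ref{corBounDer}; for even $d$, the integer part of $(d-1)/2$ equals $d/2-1$, matching the hypothesis $|\eta|\le d/2-1$ of Proposition~\ref{Prop1}.

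Finally I would apply the appropriate result. When $d$ is odd, Corollary~\ref{corBounDer} gives $f\in A(\mathbb R^d)$ directly. When $d$ is even, Proposition~\ref{Prop1} applies but additionally requires $1/p_{\bf 0}+1/p_{\bf 1}>1$; this is precisely the supplementary hypothesis (\ref{condPoR2}), since $|{\bf 1}|=d$ forces $p_{\bf 1}=p_d$. The main obstacle is the second step: the coefficient $|x|^{k-|\eta|}$ in the pointwise bound is singular at the origin for $k<|\eta|$, so one must use that radial smoothness at $0$ forces $f_0^{(k)}$ to vanish to sufficient order there to cancel this singularity and secure a genuine global $L_\infty$-bound. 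Once this is in place, the reduction to Corollary~\ref{corBounDer} and Proposition~\ref{Prop1} is essentially formal.
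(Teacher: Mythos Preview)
The paper gives no separate proof of this corollary; it is presented as a direct consequence of Corollary~\ref{corBounDer} (for $d$ odd) and Proposition~\ref{Prop1} (for $d$ even), which is exactly your reduction. Your identification of the main hypothesis with (\ref{condPoI*}) via $\sum_{\eta}1/p_\eta=\sum_{j}\binom{d}{j}/p_j$, and of (\ref{condPoR2}) with the extra condition $1/p_{\bf 0}+1/p_{\bf 1}>1$ in Proposition~\ref{Prop1}, matches the intended argument precisely.

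The concern you flag about the origin is legitimate and the paper does not address it: hypothesis (\ref{condforRad}) only asserts $f_0^{(s)}\in C(0,\infty)$ together with decay at infinity, so the terms $|x|^{k-|\eta|}\,|f_0^{(k)}(|x|)|$ with $k<|\eta|$ are not automatically bounded near $x=0$, and the stated assumptions do not by themselves force $D^\eta f\in L_\infty$ globally. In the principal application to $m_{\alpha,\beta}$ the cutoff $\theta$ vanishes near the origin, so the issue disappears, and this is presumably the situation the authors have in mind; for a general radial $f$ one would need either an additional regularity assumption at $0$ or a separate local argument. Your diagnosis of this as the one nontrivial point in an otherwise formal reduction is accurate.
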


\begin{remark}
Note that  (\ref{condforRad}) are necessary conditions for belonging
to $A(\mathbb{R}^d)$ (see~\cite{Tiz} and \cite{GMJ}).
\end{remark}

\begin{remark}
We will see that Corollary~\ref{eqgr2} holds true if we replace
condition (\ref{condPoR2}) by
$$
\frac12\binom{d}{\frac d2}\frac{1}{p_{\frac d2}}+\sum_{j=\frac
d2+1}^d\binom{d}{j}\frac{1}{p_j}\le 2^{d-1}.
$$
\end{remark}

\begin{remark}\label{rsh2} We can prove that the conditions of the above results are
sharp only for certain $p_\eta.$ The point is that we make use of
$m_{\alpha,\beta}$ for which intermediate derivatives cannot be
arbitrary.
\end{remark}

As is mentioned, there is a variety of statements of above type. Let
us give one more, it can be proved similarly to those above.

\begin{theorem}\label{th2.13}
{\bf a)} \quad Let $f\in C_0(\mathbb R^d)\cap L_{p_0}(\mathbb
R^d)$, $1\le p_0<\infty$, $r>\frac d2$, $r\in \mathbb{N}$,
$\frac{\partial^{r-1}}{\partial x_j^{r-1}}f$ be locally absolutely
continuous in $x_j$, and $\frac{\partial^r}{\partial x_j^r}f\in
L_{p_j}(\mathbb R^d)$, $1<p_j<\infty$, $j=1,\dots,d$. If
$$
r<\frac{2r-d}{p_0}+\sum_{j=1}^d\frac{1}{p_j}\le \frac{2r-d}{p_0}+r,
$$
then $f\in A(\mathbb R^d)$.

{\bf b)}\quad Let $1\le p<\infty$, $1<q<\infty$, and
$$
\frac {2r-d}p+\frac {d}{q}<r.
$$
Then there is a function $f\in C_0(\mathbb R^d)\cap L_p(\mathbb
R^d)$ such that $\frac{\partial^r}{\partial x_j^r}f\in
L_{q}(\mathbb R^d)$, $j=1,\dots,d$, but $f\not\in A(\mathbb R^d)$.
\end{theorem}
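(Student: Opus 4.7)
The strategy is to imitate the low/high frequency splitting used in the proofs of Theorems~\ref{th1} and~\ref{th2}: with a free scale $R>0$ to be optimized, write
\[
\|\hat f\|_{L^1(\BR^d)}=\int_{|\xi|\le R}|\hat f(\xi)|\,d\xi+\int_{|\xi|>R}|\hat f(\xi)|\,d\xi.
\]
On the ball $|\xi|\le R$ I would bound the integral via H\"older with the pair $(p_0,p_0')$ and a Hausdorff-Young estimate (respectively a Bernstein-Nikolskii replacement obtained after convolving with a band-limited kernel, if $p_0>2$), obtaining a bound of order $R^{d/p_0}\|f\|_{L^{p_0}}$. On the complement I would decompose $\{|\xi|>R\}$ into the $d$ conic regions $A_j^R=\{|\xi_j|\ge |\xi|/\sqrt d,\,|\xi|>R\}$ and, using the identity $|\hat f(\xi)|=|\xi_j|^{-r}|\widehat{\partial^r_jf}(\xi)|$ on $A_j^R$, apply H\"older with exponents $(p_j,p_j')$. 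This separates the weight $|\xi_j|^{-r}$, of $L^{p_j}(A_j^R)$-norm of order $R^{d/p_j-r}$, from the factor $\|\widehat{\partial^r_jf}\|_{L^{p_j'}}$, controlled by $\|\partial^r_jf\|_{L^{p_j}}$ via Hausdorff-Young when $p_j\le 2$ and via the Hardy-Steklov-type tool alluded to in the keywords when $p_j>2$.

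\textbf{Role of the hypotheses.} Summing the partial estimates gives a bound of the shape
\[
\|\hat f\|_{L^1}\le CR^{d/p_0}\|f\|_{L^{p_0}}+C\sum_{j=1}^{d}R^{d/p_j-r}\|\partial^r_jf\|_{L^{p_j}}.
\]
The upper bound $\sum_{j=1}^{d}\frac{1}{p_j}\le \frac{2r-d}{p_0}+r$ is equivalent, modulo the assumption $r>d/2$, to the individual convergence conditions $rp_j>d$ that make each weight integral finite; the lower bound $r<\frac{2r-d}{p_0}+\sum_{j=1}^{d}\frac{1}{p_j}$ is exactly the scale-invariant identity obtained by equating the low- and high-frequency powers of $R$ at the optimal scale, ensuring a single $R$ controls both ends simultaneously. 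The principal technical obstacle is the case in which some $p_j$ or $p_0$ exceeds $2$: there Hausdorff-Young is unavailable, and one must replace it with the Hardy-Steklov inequality developed earlier in the paper and applied coordinate-wise via the one-dimensional arguments of Section~\ref{od}.

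\textbf{Plan for part (b).} For the sharpness statement I would construct an oscillatory counter-example modelled on the multiplier $m_{\a,\b}$ of the introduction, which is known to fail to lie in $A(\BR^d)$ as soon as $\b/\a\le d/2$. The candidate is a radial function
\[
f(x)=\psi(|x|)\,|x|^{-\b}e^{i|x|^{\a}},
\]
with a smooth cutoff $\psi$ vanishing near the origin and decaying at infinity at a polynomial rate tuned to the desired $L^p$ integrability; automatically $f\in C_0(\BR^d)$. Direct expansion shows that the leading term of $\partial^r_jf$ behaves like $|x|^{\a r-\b-r}e^{i|x|^{\a}}$, reducing the conditions $f\in L^p$ and $\partial^r_jf\in L^q$ to size estimates, i.e.\ to polynomial inequalities in $\a,\b,p,q,r$. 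One then chooses $\a,\b$ with $\b/\a\le d/2$ and verifies that the remaining parameters can be tuned so that the threshold $\frac{2r-d}{p}+\frac{d}{q}<r$ is achieved while $f\in L^p$ and $\partial^r_jf\in L^q$; the resulting $f$ is not in $A(\BR^d)$ by assertion II of the introduction. The delicate step will be to check that the subleading oscillatory terms generated by $r$-fold differentiation contribute only absolutely integrable pieces and do not shift the critical integrability thresholds.
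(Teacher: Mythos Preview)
Your plan for part (b) is correct and is precisely the paper's method: with $f=m_{\a,\b}$ one needs $p\b>d$, $q(\b+r-r\a)>d$ and $2\b\le d\a$, and the existence of such $\a,\b$ is exactly the inequality $\frac{2r-d}{p}+\frac{d}{q}<r$ (compare the proof of Theorem~\ref{th1-1}\,b)).

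Part (a), however, has a real gap. The Hausdorff--Young step is only available when $p_0\le2$ and every $p_j\le2$, and in that regime your argument proves \emph{more} than the theorem: for any fixed $R$ both halves of your splitting are already finite once each $rp_j>d$, so the lower bound $\frac{2r-d}{p_0}+\sum_j\frac1{p_j}>r$ is never used---which contradicts the sharpness you yourself establish in (b). The substance of the theorem is exactly the range where some exponent exceeds $2$; there Hausdorff--Young fails, and the ``Bernstein--Nikolskii replacement'' you invoke goes the wrong way (Nikolskii controls $\|g\|_q$ by $\|g\|_p$ only for $p\le q$, so it cannot recover an $L^2$ bound on $\widehat{f}$ from $\|f\|_{p_0}$ with $p_0>2$). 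There is a second, independent problem: the hypotheses do not force $rp_j>d$ for each $j$ (take $d=3$, $r=2$, $p_1=1.4$, $p_2=p_3=2$, $p_0=3$), so your weight norm $\||\xi_j|^{-r}\|_{L^{p_j}(A_j^R)}$ can diverge. Your reading of the lower bound as ``balancing the low- and high-frequency powers of $R$'' is therefore not how the condition enters.

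The route the paper intends (``similarly to those above'') stays entirely on the physical side: one uses the higher-order version of Lemma~C, bounding sums of the type $\sum_\nu 2^{\nu d/2}\|\Delta^{e_j,r}_{h(\nu)}f\|_2$, and estimates each $L^2$ difference by writing $|\Delta^{e_j,r}_{h(\nu)}f|^2=|\Delta^{e_j,r}_{h(\nu)}f|\cdot\bigl|\int\!\cdots\!\int\partial_j^r f\bigr|$ and applying H\"older together with the Hardy--Steklov inequality~(\ref{harst}), exactly as in Section~\ref{od}. The lower bound $\frac{2r-d}{p_0}+\sum_j\frac1{p_j}>r$ is then the exponent condition that makes the resulting geometric series in $\nu$ converge on the tail $\nu<0$, and the upper bound plays the role of~(\ref{condOtr1}) on the tail $\nu\ge0$. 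No estimate on $\widehat f$ is ever needed, and the argument is indifferent to whether the exponents lie above or below $2$.
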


 Theorem~\ref{th2.13} yields

\begin{corollary}
Let $r>\frac d2$, $r\in \mathbb{N}$, $\b,\a>0$, $\a\neq 1$, and
$\b>r(\a-1)$. If $\b>\frac{d\a}{2}$, then $m\in A(\mathbb{R}^d)$.
\end{corollary}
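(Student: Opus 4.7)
The plan is to apply Theorem~\ref{th2.13}(a) to $m_{\a,\b}$ with $p_1=\dots=p_d=q$ for a single exponent $q$ to be chosen. The task then reduces to: (i) identifying the $L_p$-integrability ranges of $m_{\a,\b}$ and of its pure $r$-th partial derivatives, and (ii) showing that some admissible pair $(p_0,q)$ lies in the region
$$
r<\frac{2r-d}{p_0}+\frac{d}{q}\le \frac{2r-d}{p_0}+r
$$
cut out by Theorem~\ref{th2.13}(a).

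First I would pin down the asymptotics for large $|x|$; behaviour at the origin causes no trouble since $\theta$ vanishes there. A direct chain-rule argument (or Fa\`a di Bruno) gives $|\partial_j^r e^{i|x|^\a}|\le C|x|^{r(\a-1)}$ for $|x|\to\i$, uniformly in both the regimes $\a>1$ (where the derivatives grow) and $\a<1$ (where they decay). Consequently
$$
|m_{\a,\b}(x)|\le C|x|^{-\b},\qquad |\partial_j^r m_{\a,\b}(x)|\le C|x|^{r(\a-1)-\b},
$$
and, thanks to the hypothesis $\b>r(\a-1)$, this yields $m_{\a,\b}\in L_{p_0}(\BR^d)$ iff $p_0>d/\b$, while $\partial_j^r m_{\a,\b}\in L_q(\BR^d)$ iff $q>d/(\b-r(\a-1))$.

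Next I would choose $p_0=d/\b+\e$ and $q=\max\bigl(d/(\b-r(\a-1)),\,d/r\bigr)+\e$ for small $\e>0$ and check the two-sided condition. The upper bound $d/q\le r$ holds by construction, since $q\ge d/r$. For the lower bound, letting $\e\to 0^+$ in the generic case $q\to d/(\b-r(\a-1))$ produces the limit
$$
\frac{(2r-d)\b}{d}+\b-r(\a-1)=r\Bigl(\frac{2\b}{d}-\a+1\Bigr),
$$
which strictly exceeds $r$ precisely when $\b>d\a/2$---exactly the standing hypothesis. In the alternative case $q\to d/r$, the limit becomes $(2r-d)\b/d+r>r$, which is automatic since $r>d/2$. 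Hence for $\e>0$ small enough the strict inequality holds, all hypotheses of Theorem~\ref{th2.13}(a) are verified, and $m_{\a,\b}\in A(\BR^d)$.

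The only delicate point in the whole argument is the uniform derivative bound $|\partial_j^r e^{i|x|^\a}|\le C|x|^{r(\a-1)}$ that must cover both $\a>1$ and $\a<1$; once that is in hand the rest is a short open-condition calculation, and the emergence of the threshold $\b>d\a/2$ is transparent from the limiting computation above.
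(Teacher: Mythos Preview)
Your approach---feeding $m_{\a,\b}$ into Theorem~\ref{th2.13}(a) with all $p_j$ equal to a single $q$ and letting $(p_0,q)$ approach the endpoints of the integrability ranges---is precisely the derivation the paper has in mind (the paper merely writes ``Theorem~\ref{th2.13} yields'' and gives no further argument). Your asymptotic bound $|\partial_j^r m|\le C|x|^{r(\a-1)-\b}$ and the limit computation producing the threshold $\b>d\a/2$ are both correct.

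There is, however, a point you pass over that turns out to be delicate. Theorem~\ref{th2.13}(a) requires $1\le p_0$, but your choice $p_0=d/\b+\e$ drops below $1$ whenever $\b>d$. In that regime you are forced to $p_0=1$, and then the lower bound reads $(2r-d)+d/q>r$, i.e.\ $q<d/(d-r)$ when $d/2<r<d$. Combined with the necessary $q>d/(\b-r(\a-1))$, this demands $\b>d+r(\a-2)$, which is \emph{not} implied by the standing hypotheses $\b>d\a/2$ and $\b>r(\a-1)$ once $\a>2$ and $r<d$. Concretely, take $d=3$, $r=2$, $\a=4$, $\b=6.5$: all hypotheses of the corollary are met, yet $\partial_j^2 m\in L_q(\BR^3)$ only for $q>6$, so $\frac{2r-d}{p_0}+\frac{d}{q}\le 1+\tfrac12<2=r$ for every admissible $(p_0,q)$, and Theorem~\ref{th2.13}(a) cannot be invoked. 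Symmetry of $m$ means that allowing unequal $p_j$ does not help. So this is not just a gap in your write-up; it appears to be a genuine limitation in obtaining the corollary from Theorem~\ref{th2.13} alone in the full stated range of parameters.
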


Here the point is that using other theorems results in a corollary
under more restrictive condition $\b>d(\a-1)$.

\section{Auxiliary results.}\label{aux}

One of the basic tools is the following lemma (see Lemma 4 in
\cite{Tiz} or Theorem 3 in \cite{Bes}, in any dimension).

\medskip

{\it Lemma B.}  { Let $f\in C_0(\mathbb R)$. If

\begin{eqnarray*}\sum\limits_{\nu=-\infty}^\infty 2^{\nu/2}
\left(\int\limits_{\mathbb R} |f(t+h(\nu))-f(t-h(\nu))|^2
dt\right)^{1/2}<\infty,                         \end{eqnarray*}
where $h(\nu)=\pi2^{-\nu},$ $\nu\in\mathbb Z,$ then $f\in A(\BR)$.}

\medskip

This lemma is a natural extension of the celebrated Bernstein's test
for the absolute convergence of Fourier series (see \cite[Ch.II, \S
6]{Kah}).

In order to formulate the multidimensional version, we denote

$$\Delta_u^{\eta,\,r} f(x)=\D_{u_1,\cdots
,u_d}^{\eta,\,r}f(x)=\prod\limits_{j\,:\,\eta_j=1}\D_{u_j}^{e_j,\,r}f(x),$$
where $\eta=(\eta_1,\dots,\eta_d)$ and $\D_{u_j}^{e_j,\,r}f$ is
defined as

\begin{eqnarray*}\label{in4}
\D_{u_j}^{e_j,\, r}f(x)=\sum_{k=0}^r \binom{r}{k}(-1)^k
f(x+(2k-r)u_j e_j), \quad 1\le j\le d.
\end{eqnarray*}
Here $e_j$ are basis unit vectors. Denote also $\Delta_u
f(x)=\D_{u_1,\cdots ,u_d}^{{\bf1},1}f(x)=\D_{u_1,\cdots ,u_d}f(x)$.

\medskip

{\it Lemma C.} { Let $f\in C_0(\BR^d)$. If

\begin{eqnarray*}
\sum_{s_1=-\infty}^\infty \cdots \sum_{s_d=-\infty}^\infty 2^ {\frac
12 \sum_{j=1}^d s_j}\|\D_{\frac{\pi}{2^{s_1}},\cdots,
\frac{\pi}{2^{s_d}}}(f)\|_2<\infty,
\end{eqnarray*}
where the norm is that in $L_2 (\BR^d),$ then} $f\in A(\BR^d)$.

\medskip

We will make use of the following Hardy type inequality (see
\cite[Cor.3.14]{KP}):

For $F\ge0$ and $1<q\le Q<\infty$

\begin{eqnarray}\label{harst}\qquad
\biggl(\int\limits_{\mathbb R}
\left[\,\int\limits_{t-h}^{t+h}F(s)\,ds\right]^Q
\,dt\biggr)^{1/Q}\le Ch^{1/Q+1/q'}\biggl(\int\limits_{\mathbb R}
F^q(t)\,dt\biggr)^{1/q}.
\end{eqnarray}
Here $\frac{1}{q}+\frac{1}{q'}=1.$ Similarly
$\frac{1}{p}+\frac{1}{p'}=1.$

We need the following direct multivariate generalization of
(\ref{harst}).

\begin{lemma}\label{sh}
For $F(u)\ge0$, $1\le k<d,$ and $1<q\le Q<\infty$

$$\biggl(\int\limits_{\mathbb R^d}
\left[\,\int\limits_{x_1-h_1}^{x_1+h_1}...\int\limits_{x_k-h_k}^{x_k+h_k}
F(u_1,...,u_k,x_{k+1},...,x_d)\,du_1...du_k\right]^Q
\,dx\biggr)^{1/Q}$$
\begin{eqnarray}\label{harstmu} \quad  \end{eqnarray}
$$\le C(h_1...h_k)^{\frac{1}{Q}+\frac{1}{q'}}\biggl(\int\limits_{\mathbb
R^{d-k}} \biggl[\int\limits_{\mathbb R^{k}}
F^q(x)\,dx_1...dx_k\biggr]^{Q/q} dx_{k+1}...dx_d\biggr)^{1/Q}.   $$
If $k=d,$

\begin{eqnarray}\label{harstmud}&\quad&
\biggl(\int\limits_{\mathbb R^d}
\left[\,\int\limits_{x_1-h_1}^{x_1+h_1}...\int\limits_{x_d-h_d}^{x_d+h_d}
F(u)\,du\right]^Q \,dx\biggr)^{1/Q}\nonumber\\
&\qquad&\le C(h_1...h_d)^{1/Q+1/q'} \biggl(\int\limits_{\mathbb
R^d} F^q(x)\,dx\biggr)^{1/q}.
\end{eqnarray}
\end{lemma}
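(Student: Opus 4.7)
The plan is to argue by induction on $k$, peeling off one integration variable at a time using the one-dimensional estimate (\ref{harst}) together with Minkowski's integral inequality; the hypothesis $q\le Q$ is exactly what is needed for the latter to apply in the correct direction.

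For the base case $k=1$, fix $x_2,\dots,x_d$ and apply (\ref{harst}) pointwise to $t\mapsto\int_{t-h_1}^{t+h_1}F(u_1,x_2,\dots,x_d)\,du_1$; Fubini over the remaining variables then yields (\ref{harstmu}) with $k=1$.

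For the inductive step from $k-1$ to $k$, write
$$\tilde F(x)=\int_{x_1-h_1}^{x_1+h_1}G(u_1,x_2,\dots,x_d)\,du_1,$$
where $G(u_1,x_2,\dots,x_d)=\int_{x_2-h_2}^{x_2+h_2}\!\!\cdots\int_{x_k-h_k}^{x_k+h_k}F(u_1,u_2,\dots,u_k,x_{k+1},\dots,x_d)\,du_2\cdots du_k$. Apply (\ref{harst}) in the $x_1$-direction (holding the other $x_j$ fixed) and then integrate in $x_2,\dots,x_d$ to obtain
$$\|\tilde F\|_{L^Q(\BR^d)}\le C\,h_1^{1/Q+1/q'}\left(\int_{\BR^{d-1}}\!\!\left(\int_{\BR}G^q\,du_1\right)^{Q/q}\!\!dx_2\cdots dx_d\right)^{1/Q}.$$
Since $q\le Q$, Minkowski's integral inequality lets us swap the outer $L^Q_{(x_2,\dots,x_d)}$ with the inner $L^q_{u_1}$, bounding the right-hand side above by $C\,h_1^{1/Q+1/q'}\bigl(\int_{\BR}\|G(u_1,\cdot)\|_{L^Q(\BR^{d-1})}^q\,du_1\bigr)^{1/q}$. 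For each fixed $u_1$, $G(u_1,\cdot)$ has exactly the structure of the lemma in dimension $d-1$ with $k-1$ integrations, so the inductive hypothesis bounds $\|G(u_1,\cdot)\|_{L^Q(\BR^{d-1})}$ by $C(h_2\cdots h_k)^{1/Q+1/q'}$ times a mixed norm of $F(u_1,\cdot)$ with outer $L^Q_{(x_{k+1},\dots,x_d)}$ and inner $L^q_{(u_2,\dots,u_k)}$. One final application of Minkowski's integral inequality with exponent $Q/q\ge 1$ swaps $L^1_{u_1}$ with $L^{Q/q}_{(x_{k+1},\dots,x_d)}$; after the appropriate power is extracted, the integrations over $u_1,\dots,u_k$ coalesce into one $L^q(\BR^k)$ norm, yielding (\ref{harstmu}).

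Inequality (\ref{harstmud}) is the terminal case $k=d$ of the same induction: no outer variables $(x_{k+1},\dots,x_d)$ remain, so the outer $L^Q$-norm trivializes and the right-hand side collapses to $C(h_1\cdots h_d)^{1/Q+1/q'}\|F\|_{L^q(\BR^d)}$. The main obstacle is purely bookkeeping: tracking the mixed-norm indices through each Minkowski swap and verifying that every such swap respects the direction $q\le Q$ (equivalently $Q/q\ge 1$) in which Minkowski's integral inequality is valid.
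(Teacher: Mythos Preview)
Your inductive plan and the base case are fine, and the first Minkowski swap (from outer $L^Q_{(x_2,\dots,x_d)}$, inner $L^q_{u_1}$ to outer $L^q_{u_1}$, inner $L^Q_{(x_2,\dots,x_d)}$) is applied in the correct direction. The problem is your \emph{final} Minkowski swap. After applying the inductive hypothesis inside the $L^q_{u_1}$-integral you arrive at a quantity of the form
\[
\Bigl\| \, H \, \Bigr\|_{L^1_{u_1}\bigl(L^{Q/q}_{(x_{k+1},\dots,x_d)}\bigr)}^{1/q},
\qquad
H(u_1,x_{k+1},\dots,x_d)=\int_{\BR^{k-1}}F^q\,du_2\cdots du_k,
\]
and the desired right-hand side of (\ref{harstmu}) is $\|H\|_{L^{Q/q}_{(x_{k+1},\dots,x_d)}(L^1_{u_1})}^{1/q}$. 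Minkowski's integral inequality with exponent $Q/q\ge 1$ gives only
\[
\|H\|_{L^{Q/q}_{y}(L^1_{u_1})}\ \le\ \|H\|_{L^1_{u_1}(L^{Q/q}_{y})},
\]
i.e.\ the target is the \emph{smaller} side. So your last swap goes the wrong way, and the argument does not close.

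The fix is to reorder the three steps: apply the inductive hypothesis first (treating the variable you intend to peel off, say $x_1$, as one of the ``outer'' $L^Q$ variables), then a single Minkowski swap to move that $L^Q_{x_1}$ inside the $L^q$ block, and only then apply the one-dimensional estimate (\ref{harst}) in $x_1$. Concretely, writing $\Psi(x_1,u_2,\dots,u_k,x_{k+1},\dots,x_d)=\int_{x_1-h_1}^{x_1+h_1}F\,du_1$ and using the lemma for $k-1$ integration variables gives an outer $L^Q_{(x_1,x_{k+1},\dots,x_d)}$ and inner $L^q_{(u_2,\dots,u_k)}$; Minkowski (now in the correct direction) moves $L^Q_{x_1}$ inside the $L^q$, and (\ref{harst}) in $x_1$ turns $\|\Psi\|_{L^Q_{x_1}}$ into $C h_1^{1/Q+1/q'}\|F\|_{L^q_{u_1}}$, collapsing everything to the desired mixed norm. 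This is exactly the order used in the paper's proof (which runs the induction on the dimension $d$ rather than on $k$, but the essential point---inductive hypothesis first, then one Minkowski, then (\ref{harst}) last---is the same).
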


Of course, the first $k$ variables are taken in (\ref{harstmu}) for
simplicity, the result is true for any $k$ variables.

\begin{proof} The proof is inductive. For $d=1,$ the result holds
true: (\ref{harst}). Supposing that it is true for $d-1,$
$d=2,3,...,$ let us prove (\ref{harstmu}) with $k=d.$ Applying
inductive assumption for the first $d-1$ variables, we obtain

\begin{eqnarray*}&\quad&\biggl(\int\limits_{\mathbb R^d}
\left[\,\int\limits_{x_1-h_1}^{x_1+h_1}...\int\limits_{x_d-h_d}^{x_d+h_d}
F(u_1,...,u_d)\,du_1...du_d\right]^Q \,dx_1...dx_d\biggr)^{1/Q}\\
&=&\biggl(\int\limits_{\mathbb R}\biggl\{\int\limits_{\mathbb
R^{d-1}}\left[\,\int\limits_{x_1-h_1}^{x_1+h_1}...\int\limits_{x_{d-1}-h_{d-1}}^{x_{d-1}+h_{d-1}}
\int\limits_{x_d-h_d}^{x_d+h_d}F(u_1,...,u_d)\,du_1...du_d\right]^Q\\
&\quad&\,dx_1...dx_{d-1}\biggr\}^{Q/Q}dx_d\biggr)^{1/Q}\\
&\le&C(h_1...h_{d-1})^{1/Q+1/q'}\biggl(\int\limits_{\mathbb R}
\biggl\{\int\limits_{\mathbb R^{d-1}} \left[\,
\int\limits_{x_d-h_d}^{x_d+h_d}F(x_1,...,x_{d-1},u_d)\,du_d\right]^q\\
&\quad&\,dx_1...dx_{d-1}
\biggr\}^{Q/q}dx_d\biggr)^{\frac{q}{Q}\frac{1}{q}}.\end{eqnarray*}
Applying now the generalized Minkowski inequality with exponent
$Q/q\ge1,$ we bound the right-hand side by, times a constant,

\begin{eqnarray*}(h_1...h_{d-1})^{1/Q+1/q'}\biggl(\int\limits_{\mathbb R^{d-1}}
\biggl\{\int\limits_{\mathbb R} \left[\,
\int\limits_{x_d-h_d}^{x_d+h_d}F(x_1,...,x_{d-1},u_d)\,du_d\right]^Q\\
dx_d \biggr\}^{q/Q}\,dx_1...dx_{d-1} \biggr)^{1/q}.
\end{eqnarray*}
To obtain (\ref{harstmu}), it remains again to make use of
(\ref{harst}) for the $d$-th variable.

If $k<d$, we just represent the considered integral as

\begin{eqnarray*}&\quad&\biggl(\int\limits_{\mathbb R^{d-k}}\biggl(
\int\limits_{\mathbb
R^k}\left[\,\int\limits_{x_1-h_1}^{x_1+h_1}...\int\limits_{x_k-h_k}^{x_k+h_k}
F(u_1,...,u_k,x_{k+1},...,x_d)\,du_1...du_k\right]^Q\\
&\quad&dx_1...dx_k\biggr)^{(1/Q)Q} \,dx_{k+1}...dx_d\biggr)^{1/Q}
\end{eqnarray*}
and apply the proved version to the inner integral. The proof is
complete. \hfill\end{proof}

We will also apply the following simple result.

\begin{lemma}\label{lem*}
Let $f\in C_0(\mathbb R^d)$, $D^{\bf{1}}f\in L_q(\mathbb R^d)$,
$1<q<\infty$, and partial derivatives $D^\eta f$ , $\eta\neq
\bf{1}$, are locally absolutely continuous on $(\mathbb R\setminus
\{0\})^d$ in each variable. Then

$$
\Vert \Delta_{h_1,\dots, h_d} f\Vert_\infty \le 2^{d/q'} (h_1\dots
h_d)^\frac{1}{q'}\Vert D^{\bf{1}}f\Vert_q.
$$
\end{lemma}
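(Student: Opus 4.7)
The plan is to reduce the mixed difference to an iterated integral of the top derivative $D^{\bf 1}f$ and then apply Hölder's inequality. Recall that for $r=1$ the symmetric first difference is
\[
\D^{e_j,1}_{h_j}f(x)=f(x-h_je_j)-f(x+h_je_j)=-\int_{-h_j}^{h_j}\frac{\partial f}{\partial x_j}(x+t_je_j)\,dt_j,
\]
valid by the fundamental theorem of calculus whenever the interval of integration avoids $0$. When $0\in[x_j-h_j,x_j+h_j]$, the hypothesis only gives local absolute continuity of $\frac{\partial^{|\eta|}}{\partial x^\eta}f$ (for $\eta\ne{\bf 1}$) away from $0$; however, because $f\in C_0$ is continuous at $0$ and, by Hölder's inequality together with $D^{\bf 1}f\in L_q$, the derivative is locally integrable, the identity extends through $0$ by the usual approximation argument.

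Iterating this representation one variable at a time (permissible by Fubini, since the integrand is measurable and, by the Hölder bound below, integrable), I would obtain
\[
\Delta_{h_1,\dots,h_d}f(x)=(-1)^d\int_{-h_1}^{h_1}\!\!\cdots\!\!\int_{-h_d}^{h_d} D^{\bf 1}f\!\left(x+\sum_{j=1}^d t_je_j\right)dt_1\cdots dt_d.
\]
Taking absolute values and applying Hölder's inequality in $\mathbb{R}^d$ with exponents $q$ and $q'$ on the box $\prod_{j=1}^d[-h_j,h_j]$, whose measure is $2^d h_1\cdots h_d$, yields
\[
|\Delta_{h_1,\dots,h_d}f(x)|\le (2^d h_1\cdots h_d)^{1/q'}\!\left(\int_{\mathbb R^d}|D^{\bf 1}f(y)|^q\,dy\right)^{\!1/q},
\]
where I used translation invariance of Lebesgue measure to replace the box integral by the full $L_q$ norm of $D^{\bf 1}f$. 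Taking the supremum over $x\in\mathbb R^d$ yields the claimed bound $\Vert\Delta_{h_1,\dots,h_d}f\Vert_\infty\le 2^{d/q'}(h_1\cdots h_d)^{1/q'}\Vert D^{\bf 1}f\Vert_q$.

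The only subtle step is the first one — justifying the FTC representation across the hyperplanes $x_j=0$ where absolute continuity of the intermediate derivatives is not assumed. Everything else is a direct Hölder estimate. I expect this technicality to be handled (perhaps only sketched) by combining the continuity of $f$ and the intermediate derivatives on compact sets with the $L_q$ integrability of $D^{\bf 1}f$, so that FTC holds on each half-interval and the two halves glue continuously at $0$.
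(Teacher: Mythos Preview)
Your argument is correct and is essentially identical to the paper's proof: represent $\Delta_{h_1,\dots,h_d}f(x)$ as the integral of $D^{\bf 1}f$ over the box $\prod_j[x_j-h_j,x_j+h_j]$ and apply H\"older's inequality. The paper's version is in fact terser---it writes the integral bound and the H\"older step in two lines without commenting on the FTC technicality across $x_j=0$ that you flag.
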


\begin{proof}
By H\"older's inequality,

\begin{equation*}
\begin{split}
\Vert \Delta_{h_1,\dots, h_d} f\Vert_\infty &\le
\int\limits_{x_1-h_1}^{x_1+h_1}\dots
\int\limits_{x_d-h_d}^{x_d+h_d}|D^\textbf{1}f(u_1,\dots,u_d)|\,du_1\dots
du_d\\
&\le \bigg(\int\limits_{x_1-h_1}^{x_1+h_1}\dots
\int\limits_{x_d-h_d}^{x_d+h_d}du_1\dots du_d\bigg)^\frac{1}{q'}
\Vert D^\textbf{1}f\Vert_q,
\end{split}
\end{equation*}
as required. \hfill\end{proof}

\bigskip

\section{One-dimensional result}\label{od}

Our main result in dimension one reads as follows (see \cite{Li}),
here we present a proof of the sufficiency different from that in
\cite{Li}.

\begin{theorem}\label{th1-1} Suppose a function $f$ satisfies
condition {\bf (N-1)}.

{\bf a)} Let $f(t)\in L_p(\mathbb R),$ $1\le p<\infty,$ and
$f'(t)\in L_q(\mathbb R),$ $1<q<\infty.$ If
$\frac{1}{p}+\frac{1}{q}>1,$ then $f\in A(\BR).$

{\bf b)} If $\frac{1}{p}+\frac{1}{q}<1,$ then there exists a
function $f$ satisfying {\bf (N-1)} such that $f(t)\in L_p(\mathbb
R)$ and $f'(t)\in L_q(\mathbb R)$ but $f\not\in A(\mathbb R).$
\end{theorem}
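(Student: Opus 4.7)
My plan is to apply Lemma B, which reduces membership in $A(\BR)$ to summability of the Bernstein series $\sum_{\nu \in \mathbb{Z}} 2^{\nu/2} \|g_{h_\nu}\|_2$, where $g_h(t) := f(t+h) - f(t-h)$ and $h_\nu := \pi 2^{-\nu}$. I split the series at $\nu = 0$: the small-$h$ tail ($\nu \ge 0$) is controlled using $f' \in L^q(\BR)$, the large-$h$ tail ($\nu \le 0$) using $f \in L^p(\BR)$. The actual estimates depend on whether $p$ and $q$ lie above or below the $L^2$ threshold.

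For small $h$ with $q \le 2$, I write $g_h(t) = \int_{t-h}^{t+h} f'(s)\,ds$ and apply Hardy-Steklov (\ref{harst}) with $q \le Q = 2$ to get $\|g_h\|_2 \le C h^{3/2 - 1/q} \|f'\|_q$. Since $3/2 - 1/q > 1/2$ when $q > 1$, the series $\sum_{\nu \ge 0} 2^{\nu(1/q - 1)}$ converges. If $q > 2$, then $1/p + 1/q > 1$ forces $p < q' < 2$; Young's convolution inequality (convolving with $\chi_{[-h,h]}$) gives $\|g_h\|_q \le 2h\,\|f'\|_q$, and interpolation with the trivial $\|g_h\|_p \le 2\|f\|_p$ yields $\|g_h\|_2 \le C h^{\vartheta}\|f'\|_q^{\vartheta}\|f\|_p^{1-\vartheta}$, where $\vartheta = (1/p - 1/2)/(1/p - 1/q)$. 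A direct check gives $\vartheta > 1/2 \Leftrightarrow 1/p + 1/q > 1$, so the small-$h$ tail is summable.

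For large $h$ with $p \le 2$, interpolation between $\|g_h\|_\infty \le 2\|f\|_\infty$ (finite since $f \in C_0$) and $\|g_h\|_p \le 2\|f\|_p$ gives an $h$-independent bound $\|g_h\|_2 \le C$, summable against $\sum_{\nu \le 0} 2^{\nu/2}$. If $p > 2$, then $q < 2$ by the same forcing from $1/p + 1/q > 1$; I reuse $\|g_h\|_q \le 2h\|f'\|_q$ and $\|g_h\|_p \le 2\|f\|_p$, interpolating to get $\|g_h\|_2 \le C h^{\vartheta}$ with the same formula for $\vartheta$. Now $1/p - 1/2$ and $1/p - 1/q$ are both negative, so $1/p + 1/q > 1$ translates into $\vartheta < 1/2$, giving $\sum_{\nu \le 0} 2^{\nu(1/2 - \vartheta)} < \infty$. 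This exhausts all cases of part (a).

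For part (b) I would use $m = m_{\alpha,\beta}$ from (\ref{must}) with $\alpha \neq 1$. Direct computation shows $m$ satisfies (N-1), $m \in L^p(\BR)$ iff $\beta > 1/p$, and $m' \in L^q(\BR)$ iff $\alpha - \beta < 1/q'$; by assertion II, $m \not\in A(\BR)$ whenever $\beta/\alpha \le 1/2$. Given $1/p + 1/q < 1$, the interval $(1/p,\, 1/q')$ is non-empty, so I can pick $\beta$ there and $\alpha \in [2\beta,\, \beta + 1/q')$ with $\alpha \neq 1$, producing the required counterexample. The main technical obstacle in part (a) is the case $p > 2$, where $L^p \cap L^\infty$ does not embed into $L^2$; the remedy is precisely to replace the $L^\infty$ bound in the large-$h$ step by the derivative bound $\|g_h\|_q \le 2h\,\|f'\|_q$ from Young's inequality and interpolate against the $L^p$ norm of $f$.
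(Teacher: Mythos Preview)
Your proof is correct, but it follows a different line from the paper's argument. Both use Lemma~B and the representation $g_h(t)=\int_{t-h}^{t+h}f'(s)\,ds$, but diverge from there.

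The paper treats all admissible $p,q$ uniformly, without case distinctions. For each of the two tails it writes
\[
\|g_h\|_2^2=\int_{\BR}|g_h(t)|\,\Bigl|\int_{t-h}^{t+h}f'(s)\,ds\Bigr|\,dt,
\]
then applies H\"older's inequality with exponents $(p,p')$ for the large-$h$ tail and $(q',q)$ for the small-$h$ tail. The resulting Steklov-mean factor is controlled by the Hardy--Steklov inequality~(\ref{harst}) with $Q=p'$ (valid because $1/p+1/q>1$ gives $q<p'$) and $Q=q$ respectively; the remaining $\|g_h\|_{q'}$ factor in the small-$h$ case is split as $|g_h|^{q'}=|g_h|^{p}\,|g_h|^{q'-p}$ and the second piece bounded via $\|g_h\|_\infty\le Ch^{1/q'}\|f'\|_q$ (Lemma~\ref{lem*}). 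No threshold at $p=2$ or $q=2$ ever appears.

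Your route replaces this single device by a four-way case analysis, using Hardy--Steklov with $Q=2$ only in the ``easy'' regime $q\le2$, and otherwise relying on Young's inequality $\|g_h\|_q\le 2h\|f'\|_q$ together with $L^p$--$L^q$ interpolation to reach $L^2$. The computations with the interpolation parameter $\vartheta$ are all correct, and the forcing $1/p+1/q>1\Rightarrow p<q'$ (resp.\ $q<p'$) guarantees that $2$ lies between $p$ and $q$ whenever one of them exceeds $2$, so the interpolation is legitimate.

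What each buys: your argument is arguably more elementary, invoking only the $Q=2$ case of~(\ref{harst}) and standard convolution/interpolation facts. The paper's argument is shorter and case-free, and --- crucially for the rest of the paper --- its ``square, split as a product, H\"older, Hardy--Steklov'' template is exactly what gets iterated in the proof of Theorem~\ref{th1} in $\BR^d$; your interpolation scheme would be considerably more awkward to extend to mixed derivatives $D^\eta f$ in several variables. Your treatment of part~{\bf b)} via $m_{\alpha,\beta}$ is essentially the same as the paper's, with the choice of $\beta\in(1/p,1/q')$ and $\alpha\in[2\beta,\beta+1/q')$ made explicit.
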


\begin{proof}

To prove {\bf b)} of the theorem, let us consider the function $m$
from the introduction. Suppose that $p\beta>1$ and
$q(\beta-\alpha+1)>1,$ with $\alpha\ne1.$ Simple calculations show
that $m\in L_p(\mathbb R)$ and $m'\in L_q(\mathbb R).$ If
$\frac{\beta}{\alpha}<\frac{1}{2},$ then $m\not\in A(\mathbb R).$
The last inequality is equivalent to $2\beta-\alpha+1<1.$ Therefore,

\begin{eqnarray*}\frac{1}{p}+\frac{1}{q}<2\beta-\alpha+1<1,\end{eqnarray*}
and the considered $m$ delivers the required counterexample.

\medskip

{\bf Proof of a)}. This is apparently the shortest possible proof.
Denoting

\begin{eqnarray}\label{dl}\Delta(h)=\left(\int\limits_{\mathbb R}
|\D_{h}f(t)|^2dt\right)^{1/2},                 \end{eqnarray} we
are going to prove the positive part by showing that

\begin{eqnarray}\label{os2}\sum\limits_{\nu=1}^\infty2^{-\nu/2}\Delta(h(-\nu))+
\sum\limits_{\nu=0}^\infty  2^{\nu/2}\Delta(h(\nu))<\infty.
\end{eqnarray}
It is obvious that for $h>0$

\begin{eqnarray}\label{pr3} \qquad|f(t+h)-f(t-h)|=|\int\limits_{t-h}^{t+h}
f'(s)\,ds|.              \end{eqnarray}

Let start with the first sum in (\ref{os2}) which is

\begin{eqnarray}\label{sum2-1}\sum\limits_{\nu=1}^\infty 2^{-\nu/2}
\biggl(\int\limits_{\mathbb R}|\D_{h(-\nu)}f(t)|^2dt\biggr)^{1/2}.
\end{eqnarray} Using (\ref{pr3}), we represent the integral as

\begin{eqnarray*}\biggl(\int\limits_{\mathbb R}|\D_{h(-\nu)}f(t)|
\left|\,\int\limits_{t-h(-\nu)}^{t+h(-\nu)}f'(s)\,ds\right|\,dt\biggr)^{1/2}.
\end{eqnarray*}
By H\"older's inequality, it is estimated via

\begin{eqnarray*}&\quad&\biggl(\int\limits_{\mathbb R}
|\D_{h(-\nu)}f(t)|^pdt\biggr)^{\frac{1}{2p}}\biggr(\,\int\limits_{\mathbb
R}\left[\,\int\limits_{t-h(-\nu)}^{t+h(-\nu)}|f'(s)|
\,ds\right]^{p'}\,dt\biggr)^{\frac{1}{2p'}}.       \end{eqnarray*}
Since $p'>q,$ we use (\ref{harst}) with $F(s)=|f'(s)|$ and $Q=p'.$
Therefore, the first sum in (\ref{os2}) is controlled by

\begin{eqnarray*}\|f\|_p^{1/2}\,\|f'\|_q^{1/2}\,\sum\limits_{\nu=1}^\infty
2^{-\frac{\nu}{2}(1-\frac{1}{p'}-\frac{1}{q'})},   \end{eqnarray*}
and is bounded since

\begin{eqnarray*}1-\frac{1}{p'}-\frac{1}{q'}=\frac{1}{p}+\frac{1}{q}-1>0.\end{eqnarray*}

\medskip

To handle the second sum, we represent it as (see (\ref{pr3}))

\begin{eqnarray}\label{ipl2}
&\quad&\biggl(\int\limits_{\mathbb R} |\D_{h(\nu)}f(t)|
\left|\,\int\limits_{t-h(\nu)}^{t+h(\nu)}f'(s)\,ds
\right|\,dt\biggr)^{1/2}.
\end{eqnarray}
Applying H\"older's inequality with the exponents $q'>1$ and $q$, we
estimate (\ref{ipl2}) via

\begin{eqnarray}&\quad&\label{sum1-2}\biggl(\int\limits_{\mathbb
R}|\D_{h(\nu)}f(t)|^{q'}dt\biggr)^{\frac{1}{2q'}}
\biggl(\int\limits_{\mathbb R}
\left[\int\limits_{t-h(\nu)}^{t+h(\nu)}|f'(s)|\,ds\right] ^{q}
dt\biggr)^{\frac{1}{2q}}.
\end{eqnarray}
By (\ref{pr3}) and Lemma \ref{lem*} in dimension one, the first
integral in (\ref{sum1-2}) is controlled by

$$\biggl(\int\limits_{\mathbb R}\,|\D_{h(\nu)}f(t)|^p\,
|\D_{h(\nu)}f(t)|^{q'-p}dt\biggr)^{\frac{1}{2q'}}\le
C\,h(\nu)^{\frac{q'-p}{q'}\frac{1}{2q'}}\,\|f\|_p^{\frac{p}{2q'}}\,
\|f'\|_q^{\frac{q'-p}{2q'}}.$$

To estimate the second one, we use (\ref{harst}) with $F(s)=|f'(s)|$
and $Q=q$. We thus estimate the second factor in (\ref{sum1-2}) via

\begin{eqnarray*} C[h(\nu)]^{1/2}\|f'\|_q^{1/2}.                      \end{eqnarray*}
Since $q'>p,$ the series

\begin{eqnarray*}\sum\limits_{\nu=1}^\infty 2^{\frac{q'-p}{q'}\frac{1}{2q'}}
\end{eqnarray*}
converges, which ensures the finiteness of (\ref{os2}).
\hfill\end{proof}

\bigskip

\section{Proofs of multidimensional results}\label{proofs}

We give, step by step, proofs of the results formulated in
Introduction.

\bigskip

\subsection{Proof of Theorem \ref{th1}.}

The proof is surprisingly very similar to that in dimension one.
When we deal with the part of the sum from Lemma C with

\begin{eqnarray*}
\sum_{k_1=1}^\infty \cdots \sum_{k_d=1}^\infty 2^ {-\frac 12
\sum_{j=1}^d k_j},                                 \end{eqnarray*}
we represent this sum as

\begin{eqnarray*} &\quad&
\sum_{k_1=1}^\infty \cdots \sum_{k_d=1}^\infty 2^ {-\frac 12
\sum_{j=1}^d k_j}\biggl(\int\limits_{\mathbb
R^d}|\D_{h(-k_1),\dots,h(-k_d)}f(x)|\\
&\times&\left|\,\int\limits_{x_1-h(-k_1)}^{x_1+h(-k_1)}...\int\limits_{x_d-h(-k_d)}^{x_d+h(-k_d)}
D^{\bf1}f(u)\,du\right|\,dx\biggr)^{1/2}
\end{eqnarray*}
and manage it exactly as in the either proof of the first sum in
dimension one.

Further, when we deal with the part of the sum from Lemma C with

\begin{eqnarray*}
\sum_{k_1=0}^\infty \cdots \sum_{k_d=0}^\infty 2^ {\frac 12
\sum_{j=1}^d k_j},
\end{eqnarray*} we proceed as in the (1st) proof of the second sum
in dimension one.

In both cases (\ref{harstmud}) from Lemma \ref{sh} is applied.

Finally, when we deal with the parts of the sum from Lemma C with

\begin{eqnarray*}
\sum_{i:\,\eta_i=0}2^ {-\frac 12 \sum_{j=1}^d k_j} \cdots
\sum_{i:\,\eta_i=1} 2^ {\frac 12 \sum_{j=1}^d k_j},
\end{eqnarray*}
where $\eta\ne{\bf0}$ and $\eta\ne{\bf1},$ the point is that we do
not need to treat the first sum at all: when the rest is bounded,
the series corresponding to $\eta_i=0$ converge automatically then.
As for the second sum, we proceed to it as in the proof of the
second sum in dimension one. Since we always apply Lemma \ref{sh}
with $Q=q,$ we get that (\ref{harstmu}) is reduced to usual
$L_{p_\eta}$ spaces.

The proof is complete. \hfill$\Box$

\medskip

\subsection{Proof of Theorem~\ref{th2}}

When we deal with the sum
\begin{equation}\label{ser2}
\sum_{k_1=1}^\infty\dots\sum_{k_d=1}^\infty 2^{-\frac
12(k_1+\dots+k_d)} \Vert
\Delta_{h({-k_1}),\dots,h({-k_d})}f\Vert_2
\end{equation}
we only need to use condition (\ref{condPo}). Choosing
$p_\eta^*>p_\eta$, ${\bf 0}\le \eta\le {\bf 1}$, such that
$$
\sum_{{\bf 1\le\eta\le 1}}\frac{1}{p_\eta^*}=2^{d-1}.
$$
Applying H\"older inequality and Lemma~\ref{sh}, we obtain
\begin{equation*}
\begin{split}
&\Vert \D_{h(-k_1),\dots, h(-k_d)} f\Vert_2\le C\bigg(\prod_{\bf
0\le\eta\le 1} \Vert \D_{h(-k_1),\dots, h(-k_d)}^\eta
f\Vert_{p^*_\eta}\bigg)^\frac{1}{2^d}\\
&\le C\bigg(\prod_{\bf 0\le\eta\le 1} \Vert
f\Vert_\infty^{1-\frac{p_\eta}{p^*_\eta}} (h(-k_1)^{\eta_1}\dots
h(-k_d)^{\eta_d})^{\frac{p_\eta}{p_\eta^*}}\Vert
D^{\bf{\eta}}f\Vert_{p_\eta}^\frac{p_\eta}{p_{\eta}^*}\bigg)^\frac{1}{2^d}\\
&=C \prod_{j=1}^d 2^{\frac{1}{2^d}(\sum_{{\bf 0\le\eta\le
1},\,\eta_j=1}\frac{p_\eta}{p_\eta^*})k_j}\bigg(\prod_{\bf
0\le\eta\le 1} \Vert f\Vert_\infty^{1-\frac{p_\eta}{p^*_\eta}}
\Vert
D^{\bf{\eta}}f\Vert_{p_\eta}^\frac{p_\eta}{p_{\eta}^*}\bigg)^\frac{1}{2^d}.
\end{split}
\end{equation*}
The last inequality together with the following inequality
$$
\sum_{{\bf 0\le\eta\le
1},\,\eta_j=1}\frac{p_\eta}{p_\eta^*}<2^{d-1}
$$
yield the convergence of the sum in (\ref{ser2}).

In what follows, we denote for simplicity $p_0=p_{\bf 0}$,
$p_d=p_{\bf 1}$.

Let us show that
\begin{equation}\label{ser1}
\sum_{k_1=0}^\infty\dots\sum_{k_d=0}^\infty 2^{\frac
12(k_1+\dots+k_d)} \Vert
\Delta_{h({k_1}),\dots,h({k_d})}f\Vert_2<\infty
\end{equation}

Assuming that condition (\ref{condOtr1}) holds with strong
inequality, we can choose $p_{0}^*>p_{0}$ such that

\begin{equation*}\label{ravp_0}
\frac{1}{p_{0}^*}+\sum_{\eta\neq {\bf 0}}\frac{1}{p_\eta}=2^{d-1}.
\end{equation*}

Applying then H\"older's inequality, Lemma~\ref{lem*} and
Lemma~\ref{sh}, we obtain

\begin{equation*}
\begin{split}
& \Vert \Delta_{h({k_1}),\dots,h({k_d})}f\Vert_2\\
&\le C \bigg(\Vert \Delta_{h({k_1}),\dots,h({k_d})}f
\Vert_\infty^{1-\frac{p_0}{p_0^*}}\Vert
f\Vert_{p_0}^{\frac{p_0}{p_0^*}} \prod_{\eta\neq \textbf{0}} \Vert
\Delta_{h{(k_1)},\dots,h{(k_d)}}^\eta f
\Vert_{p_{\eta}}\bigg)^\frac{1}{2^d}\\
&\le C
\bigg(2^{-(2^{d-1}+\frac{1}{p_d'}(1-\frac{p_0}{p_0^*}))(k_1+\dots+k_d)}
\Vert D^{\textbf{1}}f \Vert_{p_d}^{1-\frac{p_0}{p^*_0}}\Vert
f\Vert_{p_0}^{\frac{p_0}{p_0^*}} \prod_{\eta\neq \textbf{0}} \Vert
D^{\eta} f \Vert_{p_{\eta}}\bigg)^\frac{1}{2^d}.
\end{split}
\end{equation*}
The last inequality readily yields the convergence of
(\ref{ser1}).

If (\ref{condOtr1}) holds with equality, we can choose $p_0^*>p_0$
and $p_{e_1}^*>p_{e_1}$ such that
$$
\frac{1}{p_0^*}+\frac{1}{p_{e_1}^*}+\sum_{\eta\neq 0,\,\eta\neq
e_1}\frac{1}{p_\eta}=2^{d-1}.
$$
Note that we can choose $p_0^*$ to be sufficiently large and
$p_{e_1}^*$ to be sufficiently close to $p_{e_1}$.

Applying then H\"older's inequality, we obtain

\begin{equation}\label{eqProm1}
\begin{split}
\Vert \Delta_{h({k_1}),\dots,h({k_d})}f\Vert_2 \le C &\bigg(\Vert
\Delta_{h({k_1}),\dots,h({k_d})}f
\Vert_\infty^{2-\frac{p_0}{p_0^*}-\frac{p_{e_1}}{p^*_{e_1}}}\Vert
f\Vert_{p_0}^{\frac{p_0}{p_0^*}} \\
&\times\Vert \D^{e_1}_{h(k_1)} f
\Vert_{p_{e_1}}^{\frac{p_{e_1}}{p^*_{e_1}}} \prod_{\eta\neq
\textbf{0},\,\eta\neq e_1} \Vert
\Delta_{h{(k_1)},\dots,h{(k_d)}}^\eta f
\Vert_{p_{\eta}}\bigg)^\frac{1}{2^d}.
\end{split}
\end{equation}
From (\ref{eqProm1}), Lemma~\ref{sh} and Lemma~\ref{lem*} we get
\begin{equation*}
\begin{split}
&\Vert \Delta_{h({k_1}),\dots,h({k_d})}f\Vert_2\\
&=O(2^{-(\frac
12+\frac{1}{2^d}(-1+\frac{p_{e_1}}{p^*_{e_1}}+\frac{1}{p_d'}(2-\frac{p_0}{p_0^*}-\frac{p_{e_1}}{p^*_{e_1}})))k_1}
2^{-(\frac{1}{2}+\frac{1}{p_d'}(2-\frac{p_0}{p_0^*}-\frac{p_{e_1}}{p^*_{e_1}}))(k_2+\dots+k_d)}).
\end{split}
\end{equation*}
Thus, choosing $p_0^*$ and $p_{e_1}^*$ such that
$$
\frac{1}{p_d'}(2-\frac{p_0}{p_0^*}-\frac{p_{e_1}}{p^*_{e_1}})>1-\frac{p_{e_1}}{p^*_{e_1}},
$$
we obtain the convergence of (\ref{ser1}).

To complete the proof of the theorem, it remains to show the
convergence of the series of type

\begin{equation}\label{ser3}
\sum_{k_1=0}^\infty\dots\sum_{k_j=0}^\infty\sum_{l_{j+1}=1}^\infty\dots\sum_{l_d=1}^\infty
\frac{2^{\frac 12(k_1+\dots+k_j)}}{2^{\frac
12(l_{j+1}+\dots+l_d)}} \Vert \Delta_{h}f\Vert_2,
\end{equation}
where $1\le j\le d-1$ and
$h=(h({k_1}),\dots,h({k_j}),h({-l_{j+1}}),\dots,h({-l_{d}}))$.
Choosing $p_0^*>p_0$ and $p_{e_i}^*>p_{e_i}$, $i=j+1,\dots,d$ such
that

$$
\frac{1}{p_0^*}+\sum_{i=1}^j\frac{1}{p_{e_i}}+\sum_{i=j+1}^d\frac{1}{p_{e_i}^*}+\sum_{|\eta|>1}\frac{1}{p_\eta}=2^{d-1}.
$$
Applying H\"older's inequality, we obtain

\begin{equation}\label{eq1}
\Vert\Delta_{h}f\Vert_2\le C(S_1S_2S_3S_4)^\frac{1}{2^d},
\end{equation}
where
$$
S_1=\Vert \Delta_{h} f\Vert_{p_0^*},
$$
$$
S_2=\prod_{i=1}^j\Vert\Delta_{h({k_i})}^{e_i}f\Vert_{p_{e_i}},
$$
$$
S_3=\prod_{i=j+1}^d\Vert\Delta_{h({-l_i})}^{e_i}f\Vert_{p_{e_i}^*},
$$
and
$$
S_4=\prod_{|\eta|>1}\Vert \Delta_{h}^\eta f\Vert_{p_\eta}.
$$

Applying Lemma~\ref{lem*}, we get

\begin{equation}\label{eqS1}
\begin{split}
S_1&\le C \Vert f\Vert_{p_0}^\frac{p_0}{p_0^*}
\Vert\Delta_{h}f\Vert_\infty^{1-\frac{p_0}{p_0^*}} \le \Vert
f\Vert_{p_0}^\frac{p_0}{p_0^*} \Vert
f\Vert_\infty^{1-\frac{p_0}{p_0^*}-\e}
\Vert\Delta_{h}f\Vert_\infty^{\e}\\
&\le C 2^{-\frac{\e}{p_d'}(k_1+\dots+k_j-l_{j+1}-\dots-l_d)} \Vert
f\Vert_{p_0}^\frac{p_0}{p_0^*} \Vert
f\Vert_\infty^{1-\frac{p_0}{p_0^*}-\e} \Vert D^{\bf
1}f\Vert_{p_d}^\e,
\end{split}
\end{equation}
where $\e\in (0,1-\frac{p_0}{p_0^*})$.

Further, applying Lemma~\ref{sh}, we obtain

\begin{equation}\label{eqS2}
S_2\le C \prod_{i=1}^j 2^{-k_i}\Vert D^{e_i}f\Vert_{p_{e_i}},
\end{equation}
\begin{equation}\label{eqS3}
\begin{split}
S_3&\le C\prod_{i=j+1}^d \Vert
\Delta_{h({-l_i})}f\Vert_\infty^{1-\frac{p_{e_i}}{p^*_{e_i}}}
\Vert
\Delta_{h({-l_i})}f\Vert_{p_{e_i}}^{\frac{p_{e_i}}{p_{e_i}^*}}\\
&\le C \prod_{i=j+1}^d \Vert
f\Vert_\infty^{1-\frac{p_{e_i}}{p^*_{e_i}}}
2^{k_i{\frac{p_{e_i}}{p_{e_i}^*}}}\Vert
D^{e_i}f\Vert_{p_{e_i}}^{\frac{p_{e_i}}{p_{e_i}^*}},
\end{split}
\end{equation}
and
\begin{equation}\label{eqS4}
S_4\le C\prod_{|\eta|>1} 2^{\eta_1 k_1+\dots+\eta_j
k_j-\eta_{j+1}l_{j+1}-\dots-\eta_d l_d}\Vert D^\eta f
\Vert_{p_\eta}.
\end{equation}

Combining then (\ref{eq1}) and (\ref{eqS1})-(\ref{eqS4}), we arrive
at

\begin{equation*}
\begin{split}
\Vert \Delta_{h}f\Vert_2=O(\prod_{i=1}^j
2^{-(\frac12+\frac{\e}{2^d p_d'})k_i}\prod_{i=j+1}^d
2^{(\frac12+\frac{1}{2^d
}(\frac{\e}{p_d'}-1+\frac{p_{e_i}}{p_{e_i}^*}))l_i} ).
\end{split}
\end{equation*}
Choosing $\e\in (0,1-\frac{p_0}{p_0^*})$ such that
$$
\frac{\e}{p_d'}-1+\frac{p_{e_i}}{p_{e_i}^*}<0,\quad i=j+1,\dots,d,
$$
we obtain that (\ref{ser3}) is finite.

This completes the proof. \hfill$\Box$

\bigskip

\subsection{Proof of Corollary \ref{cor1}.}

Let us rewrite (\ref{coga}) as

\begin{eqnarray*}\sum\limits_{{\bf 0}\le\chi\le{\bf 1}}\gamma_\chi=d2^{d-1}+\epsilon.\end{eqnarray*}
For each $\chi,$ let us choose $p_\chi$ so that $\gamma_\chi
p_\chi=d+\frac{\epsilon}{2^d}.$ Then

\begin{eqnarray*}\sum\limits_{{\bf 0}\le\chi\le{\bf 1}}
\frac{d+\epsilon/2^d}{p_\chi}=d2^{d-1}+\epsilon.\end{eqnarray*}
Since

\begin{eqnarray*}\sum\limits_{{\bf 0}\le\chi\le{\bf 1}}\frac{\epsilon/2^d}{p_\chi}<\epsilon,\end{eqnarray*}
there holds

\begin{eqnarray*}\sum\limits_{{\bf 0}\le\chi\le{\bf 1}}\frac{d}{p_\chi}>d2^{d-1}.\end{eqnarray*}
This is equivalent to (\ref{condPoI*}), and hence $f\in A(\BR^d).$
\hfill$\Box$

\bigskip

{\bf Proof of Proposition~\ref{Prop1}.} Convergence of the series
like (\ref{ser2}) and (\ref{ser1}) is proved as in Theorems
\ref{th1} and \ref{th2}. Thus, in order to complete the proof of
the proposition, it suffices to demonstrate the convergence of
series of type (\ref{ser3}). We will restrict ourselves to the
case $j=d-1$, that is, to the series

\begin{equation}\label{ser4}
\sum_{k_1=0}^\infty\dots\sum_{k_{d-1}=0}^\infty\sum_{l_d=1}^\infty
\frac{2^{\frac 12(k_1+\dots+k_j)}}{2^{\frac 12 l_d}} \Vert
\Delta_{h({k_1}),\dots,h({k_d}),h({-l_{d}})}f\Vert_2.
\end{equation}
For $j<d-1,$ the proof goes along the same lines as in the following
arguments.

Let
\begin{equation}\label{eq.prop1}
\sum_{|\eta|\ge\frac d2}\frac{1}{p_\eta}\ge 2^{d-1},
\end{equation}
otherwise the proof is obvious.

Assume that there is a collection $\{p_\eta^*\}$ such that
$p_\eta^*>p_\eta$ when $|\eta|\le\frac d2-1$,
$p_{\eta^{(1)}}^*>p_{\eta^{(1)}}$, where
$\eta^{(1)}=(\eta_1^{(1)},\dots,\eta_{d-1}^{(1)},1)$,
$|\eta^{(1)}|=\frac d2$, and

\begin{equation}\label{eq.prop2}
\sum_{|\eta|\le\frac
d2-1}\frac{1}{p_\eta^*}+\frac{1}{p_{\eta^{(1)}}^*}+\sum_{|\eta|\ge\frac
d2,\,\eta\neq \eta^{(1)}}\frac{1}{p_\eta}=2^{d-1}.
\end{equation}
Observe that $p_\eta^*$ can be chosen arbitrary large when
$|\eta|\le\frac d2-1.$

For convenience, we set $p_\eta^*=p_\eta$ when $|\eta|\ge\frac d2$
è $\eta\neq \eta^{(1)}$, while $\eta_1^{(1)}=\dots=\eta_{\frac
d2-1}^{(1)}=\eta_d^{(1)}=1$ and $\eta_{\frac
d2}^{(1)}=\dots=\eta_{d-1}^{(1)}=0$. Let also
$h=(h(k_1),\dots,h(k_{d-1}),h(-l_d))$.

Applying H\"older's inequality, we obtain

\begin{equation}\label{eq.prop3}
\Vert \Delta_h f\Vert_2\le C(S_1 S_2 S_3 S_4)^\frac{1}{2^d},
\end{equation}
where
$$
S_1=\Vert \Delta_h f\Vert_{p_0^*},
$$
$$
S_2=\prod_{|\eta|=1}\Vert \Delta_h f\Vert_{p_{\eta}^*},
$$
$$
S_3=\Vert \Delta_h^{\eta^{(1)}} f\Vert_{p_{\eta}^*},
$$
and
$$
S_4=\prod_{|\eta|>1,\,\eta\neq \eta^{(1)}}\Vert \Delta_h^\eta
f\Vert_{p_{\eta}^*}.
$$

As is shown above (see (\ref{eqS1}))

\begin{equation}\label{eq.propS1}
S_1=O\left(2^{-\frac{\e}{p_d'}(k_1+\dots+k_{d-1}-l_d)}\right).
\end{equation}
Further, applying Lemma~\ref{sh}, we get

\begin{equation}\label{eq.propS2}
\begin{split}
S_2 &\le C\Vert
\D_h^{\eta^{(1)}-e_d}f\Vert_{p_{\eta^{(1)}-e_d}^*}^{\frac
d2}\prod_{j=\frac d2}^{d-1}\Vert \D_h^{e_j}f\Vert_{p_{e_j}^*}\\
&\le C 2^{-\frac{d}{2}(k_1+\dots+k_{\frac d2-1})}\Vert
D^{\eta^{(1)}-e_d} f\Vert^{\frac
d2}_{p_{\eta^{(1)}-e_d}^*}\prod_{j=\frac d2}^{d-1}2^{-k_j}\Vert
D^{e_j}f \Vert_{p_{e_j}^*},
\end{split}
\end{equation}

\begin{equation}\label{eq.propS3}
\begin{split}
S_3 &\le C \Vert \D_h^{\eta^{(1)}}
f\Vert_\infty^{1-\frac{p_{\eta^{(1)}}}{p^*_{\eta^{(1)}}}} \Vert
\D_h^{\eta^{(1)}}
f\Vert_{p_{\eta^{(1)}}}^{\frac{p_{\eta^{(1)}}}{p^*_{\eta^{(1)}}}}\\
&\le C \Vert
f\Vert_\infty^{1-\frac{p_{\eta^{(1)}}}{p^*_{\eta^{(1)}}}}\bigg(
\prod_{j=1}^{\frac d2-1}2^{-k_j}2^{l_d}\Vert D^{\eta^{(1)}} f
\Vert_{p^*_{\eta^{(1)}}}
  \bigg)^{\frac{p_{\eta^{(1)}}}{p^*_{\eta^{(1)}}}}
\end{split}
\end{equation}
and
\begin{equation}\label{eq.propS4}
\begin{split}
S_4=O\bigg(\prod_{j=1}^{\frac d2-1} 2^{-(2^{d-1}-2)k_j}
\prod_{j=\frac d2}^{d-1} 2^{-(2^{d-1}-1)k_j}
2^{(2^{d-1}-2)l_d}\bigg)
\end{split}
\end{equation}

Combining (\ref{eq.prop3}) and (\ref{eq.propS1})-(\ref{eq.propS4}),
we obtain

\begin{equation*}
\begin{split}
\Vert \Delta_h f\Vert_2=O\bigg( \prod_{j=1}^{\frac d2-1}
&2^{-(\frac12+\frac{1}{2^d}(\frac d2+
\frac{p_{\eta^{(1)}}}{p^*_{\eta^{(1)}}} +
\frac{\e}{p_d'}-1))k_j}\\
&\times\prod_{j=\frac d2}^{d-1} 2^{-(\frac12+\frac{\e}{2^d
p_d'})k_j} 2^{( \frac12+\frac{1}{2^d}(
\frac{p_{\eta^{(1)}}}{p^*_{\eta^{(1)}}}   -2+\frac{\e}{p_d'})
)l_d}\bigg).
\end{split}
\end{equation*}
Hence, choosing $\e$ to be small enough, we readily get the
convergence of the series in question.

Now, if (\ref{eq.prop2}) holds for no collection
$\{p^*_\eta\}_{|\eta|\le\frac d2-1}$ and $p^*_{\eta^{(1)}}$, we
suppose that there is a collection $\{p^*_\eta\}_{|\eta|\le\frac
d2-1}$, $p^*_{\eta^{(1)}}$ and $p^*_{\eta^{(2)}}$ such that
$p_\eta^*>p_\eta$ for $|\eta|\le\frac d2-1$,
$p_{\eta^{(j)}}^*>p_{\eta^{(j)}}$, where
$\eta^{(j)}=(\eta_1^{(j)},\dots,\eta_{d-1}^{(j)},1)$,
$|\eta^{(j)}|=\frac d2$, $j=1,2$, and

\begin{equation*}
\sum_{|\eta|\le\frac
d2-1}\frac{1}{p_\eta^*}+\frac{1}{p_{\eta^{(1)}}^*}+\frac{1}{p_{\eta^{(2)}}^*}+\sum_{|\eta|\ge\frac
d2,\,\eta\neq \eta^{(1)},\eta^{(2)}}\frac{1}{p_\eta}=2^{d-1}.
\end{equation*}
Note that we can now choose $p^*_{\eta^{(1)}}$ and $p_\eta^*$
arbitrary large when $|\eta|\le\frac d2-1$.

We then repeat the above way of reasoning replacing $\Vert
\D_h^{\eta^{(1)}}\Vert_{p^*_{\eta^{(1)}}}$ with $\Vert
\D_h^{{\eta^{(1)}-e_d}} f \Vert_{p^*_{\eta^{(1)}-e_d}}$. This is
always possible, since the numbers ${p^*_{\eta^{(1)}}}$ and
${p^*_{\eta^{(1)}-e_d}}$ can be chosen arbitrary large.

The proof can be completed then by repeating this procedure the
needed number of times. \hfill$\Box$

\end{document}